\newcommand{\mbbN}{\mathbb{N}}
\newcommand{\mbbZ}{\mathbb{Z}}
\newcommand{\mbbP}{\mathbb{P}}
\newcommand{\mbbH}{\mathbb{H}}
\newcommand{\mbbE}{\mathbb{E}}
\newcommand{\indicator}{\mathds{1}}
\newcommand{\eps}{\varepsilon}
\newcommand{\del}{\delta}
\newcommand{\intl}{\int \limits}
\newcommand{\suml}{\sum \limits}
\newcommand{\liml}{\lim \limits}
\newcommand{\supl}{\sup \limits}
\newcommand{\maxl}{\max \limits}
\newcommand{\p}{\prime}
\newcommand{\abs}[1]{{\left| #1 \right|}}
\newcommand{\wpart}[1]{{\left[ #1 \right]}}
\DeclareMathOperator{\diam}{diam}
\newcommand{\dint}{{\text{d}}}
\renewcommand{\le}{\leqslant}
\renewcommand{\ge}{\geqslant}
\theoremstyle{plain}
\newtheorem{thm}{Theorem}
\newtheorem*{thm*}{Theorem}
\newtheorem*{lm*}{Lemma}
\newtheorem{lm}{Lemma}
\theoremstyle{definition}
\newtheorem{defn}{Definition}
\newtheorem{qst}{Question}
\theoremstyle{remark}
\newtheorem*{rem}{Remark}
\newcommand{\lr}[1]{{\left( #1\right)}}
\DeclareMathOperator{\Ind}{CInd}
\DeclareMathOperator{\spn}{spn}
\DeclareMathOperator{\sep}{sep}
\newcommand{\acts}[1]{\stackrel{{{{#1}}}}{{\curvearrowright}}}
\begin{document}

\title[Non-existence of a universal zero entropy system]{Non-existence of a universal zero entropy system for non-periodic amenable group actions}
\author{Georgii Veprev}
\date{\today}
\thanks{The work is supported by Ministry of Science and Higher Education of the Russian Federation, agreement №~075-15-2019-1619. 
The work is also supported by the V.~A.~Rokhlin scholarship for young mathematicians.
}
\address{Leonhard Euler International Mathematical Institute in St. Petersburg, 
\newline 14th Line 29B, Vasilyevsky Island, St. Petersburg, 199178, Russia}
\email{egor.veprev@mail.ru}
\keywords{Universal system, variational principle, scaling entropy, coinduced action, amenable group action, zero entropy.}

\maketitle

\begin{abstract}
    Let $G$ be a non--periodic amenable group. We prove that there does not exist a topological action of $G$ for which the set of {ergodic} invariant measures coincides with the set of all {ergodic} measure--theoretic $G$--systems of entropy zero. Previously J.~Serafin, answering a question by B.~Weiss, proved the same for $G = \mbbZ$.
\end{abstract}

\section{Introduction}

In this work, we generalize the result of \cite{S} to the case of a non--periodic amenable group $G$. Namely, we prove that there does not exist a topological zero entropy system which is universal for {ergodic} measure--theoretic $G$--actions with zero entropy. The main tool that we implement in the proof is the notion of \emph{scaling entropy} which was proposed in works by A.~M.~Vershik \cite{V1,V3}. The theory of scaling entropy was developed in \cite{PZ, VPZ, Z1, Z2}. In this work, we prove the lower bound for the $\eps$--entropy of an averaging of independent metrics (Lemma~\ref{lm_estimate}) which allows us to estimate a scaling entropy for a special series of examples of $G$--actions. The existence of such a series (see Definition~\ref{def_almost}) implies the absence of a universal zero entropy system.  

{
The concrete examples of dynamical systems that we use to prove our main result (Theorem~\ref{thm_univ}) also give the answer to the question mentioned in~\cite{KKW}. 
That is, we show in Theorem~\ref{thm_kush} that for any sequence $A = \{a_n\}_{n=1}^\infty$ such that $a_{n+1} - a_n \to \infty$ there is a measure--preserving transformation with zero entropy  which has positive \emph{sequential entropy}~\cite{Kush} with respect to $A$.}

\subsection{Classical notions}
Let us recall several basic notions of the entropy theory of dynamical systems (see, e.\,g., \cite{KL}). A countable group $G$ is called \emph{amenable} if it satisfies the F\o lner condition, meaning that there is a sequence $\{F_n\}_{n=1}^\infty$ of finite subsets of $G$ such that for any $g \in G$
\[
\liml_{n \to \infty} \frac{\abs{gF_n \triangle F_n}}{\abs{F_n}} = 0.
\]
In this case, the sequence is called a left F\o lner sequence. A right F\o lner sequence can be defined in a similar way. We will consider left actions of amenable groups on Lebesgue measure spaces without point masses, i.\,e., measure spaces  that are isomorphic to the unit segment with the Lebesgue measure.

\subsubsection{Amenable Topological Entropy}
Let an amenable group $G$ act by homeomorphisms on a compact metric space $(X, d)$. The amenable topological entropy of this action can be defined in the following way: 
\[
h_{top}(X, G) = 
\supl_{\eps > 0} \limsup\limits_{n\to +\infty} \frac{1}{\abs{F_n}}\log \spn(d, F_n, \eps) =
\supl_{\eps > 0} \limsup\limits_{n\to +\infty} \frac{1}{\abs{F_n}}\log \sep(d, F_n, \eps),
\]
where $\spn(d, F_n, \eps)$ and $\sep(d, F_n, \eps)$ are cardinalities of the minimal $\eps$--net and the maximal $\eps$--separated set respectively for the maximized metric
\[
G_{max}^n d (x,y) = \maxl_{g \in F_n} d(gx, gy), \qquad x,y \in X.
\]   
The value of $h_{top}(X,G)$ does not depend on the choice of F\o lner sequence and forms a topological invariant of a dynamical system.

\subsubsection{Amenable Measure Entropy}

Assume that $G$ acts by automorphisms on a standard probabi\-lity space $(X, \mu)$. The \emph{entropy} of a  measurable partition $\xi$ is defined as the following non--negative value:
\[
H(\xi) = -\intl_X \log\mu(\xi(x))\ \dint \mu(x),
\]
where $\xi(x)$ stands for the cell of $\xi$ that contains a point $x \in X$. For a partition $\xi$ with finite entropy, define its entropy with respect to the measure--preserving action of $G$:
\vspace{-5pt}
$$
h(\xi) = \liml_{n\to+\infty} \frac{1}{\abs{F_n}}H\left(\bigvee\limits_{g \in F_n} g^{-1}\xi\right),
$$
where $\vee$ is the refinement sign. \emph{Amenable measure entropy} of the action is defined by
\[
h(X,\mu, G) = \sup\{h(\xi)\colon H(\xi)<+\infty\}.
\]
Amenable measure entropy is independent of the choice of F\o lner sequence and forms an invariant of a measure--preserving system.

\subsubsection{The Variational Principle}
The variational principle is a well--known relation between topological and measure--theoretic entropies. Let $G \acts{} (X,d)$ be a continuous action of $G$ on a compact metric space and $M_G(X)$ be a set of all $G$--invariant Borel probability measures on $X$. Then the following holds 
\[
h_{top}(X,G) = \sup_{\mu\in M_G(X)} h(X, \mu, G),
\]
in particular, $M_G(X)$ is nonempty. Note that $h_{top}(X, G) \ge h(X,\mu, G)$, therefore, if the topolo\-gical entropy is zero, then the measure--theoretic entropy is zero as well. {Moreover, if all ergodic measures in $M_G(X)$ have entropy zero, then the topological entropy is also zero}.

\subsection{Universal systems of entropy zero}
Questions about the existence of universal dynamical systems in various senses have been studied, for example, in~\cite{DS, S, SW, VZ2, W}.  
We will use the following definition.
\begin{defn}\label{def_univ}
    A topological system~$(X, G)$ is called \emph{universal} for some class~$\mathcal{S}$ of {ergodic} measure--preserving actions of~$G$ if the following two conditions hold. 
    \begin{enumerate}
        \item For any {ergodic} $\mu \in M_G(X)$ the system $(X, \mu, G)$ belongs to $\mathcal{S}$.
        \item For any $(Y,\nu, G) \in \mathcal{S}$ there exists an invariant measure $\mu$ on $(X, G)$ such that  $(X, \mu, G)$ is measure--theoretically isomorphic to $(Y,\nu, G)$.
     \end{enumerate}
\end{defn}

In \cite{S}, the question about the existence of a universal system for all zero entropy systems appears. This question goes back to B.~Weiss. Due to the variational principle and the first condition in Definition~\ref{def_univ}, such a system must have zero topological entropy.

\begin{qst}\label{quest_zeroentropy}
    Does there exist a system $(X, G)$ with zero topological entropy which is universal for the class of all {ergodic} measure--preserving actions of zero entropy?
    \footnote{Note that the notion of a "universal" system is often used in a slightly different sense. Sometimes it is only required to satisfy the second condition in Definition~\ref{def_univ}. In this case, the question can be easily solved via the famous Krieger's finite generator theorem (see \cite{Kr}): every {ergodic} automorphism $T$ with entropy less than one can be realized in the left shift on $\{0,1\}^\mathbb{Z}$.}
\end{qst}

The work by J.~Serafin \cite{S} gives the negative answer to Question~\ref{quest_zeroentropy} for the case of the group $\mbbZ$. However, this question is still open for general amenable groups. The approach of \cite{S} is based on the notions of symbolic and measure--theoretic complexity of a dynamical system (see also~\cite{F}) and special constructions of systems with rapidly growing measure--theoretic complexity. The author of that work points out that this approach did not work for the case of amenable groups due to insufficient development of the theory of symbolic extensions. Let us remark that the notion of measure--theoretic complexity is closely related to the notion of scaling entropy that we use. The main result of our work is the following theorem, which gives the negative answer to Question~\ref{quest_zeroentropy} in the case of a  non--periodic amenable group $G$.


\begin{thm}\label{thm_univ}
    Let $G \acts{} (X,d)$ be a continuous action of a countable non--periodic amenable group~$G$ on a compact metric space $(X, d)$. Suppose that for any {ergodic} measure--preserving dynamical system~$(Y, \nu, G)$ with zero entropy there exists an invariant measure~$\mu$ on~$X$ with   
    \[
    (X, \mu, G) \cong (Y, \nu, G).
    \]
    Then the topological entropy of $(X,d,G)$ is positive.
\end{thm}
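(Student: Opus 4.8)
The plan is to argue by contradiction through the \emph{scaling entropy} invariant. Suppose that $h_{top}(X, G) = 0$; I will produce a single zero entropy system that cannot be realized as an invariant measure on $X$, contradicting the hypothesis of the theorem. The whole argument rests on two opposing estimates: vanishing topological entropy forces a \emph{uniform} subexponential upper bound on the scaling entropy of all invariant measures on $X$, while non-periodicity lets me build a zero entropy system whose scaling entropy exceeds that bound.

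First I would record the upper bound. For an invariant measure $\mu$ and the F\o lner sequence $\{F_n\}$, consider the averaged semimetric
\[
\rho_n(x,y) = \frac{1}{\abs{F_n}}\suml_{g \in F_n} d(gx, gy),
\]
which satisfies $\rho_n \le G_{max}^n d$ pointwise. Hence every $\eps$-net for $G_{max}^n d$ is also an $\eps$-net for $\rho_n$, so the measure $\eps$-entropy obeys $\mbbH_\eps(\rho_n, \mu) \le \log \spn(d, F_n, \eps)$. Since $h_{top}(X,G) = 0$, for each fixed $\eps > 0$ we have $\log \spn(d, F_n, \eps) = o(\abs{F_n})$, a bound independent of $\mu$. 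Fixing a sequence $\eps_i \downarrow 0$ and applying a diagonal argument to the countable family $\{\log \spn(d, F_n, \eps_i)\}_i$ of subexponential sequences produces one sequence $\Phi(n) = o(\abs{F_n})$ that eventually dominates each of them. Because $d$ is an admissible generating metric ($X$ is compact and $\mu$ is Borel), the isomorphism invariance of scaling entropy from \cite{PZ, VPZ, Z1, Z2} implies that the scaling entropy of \emph{every} system $(X, \mu, G)$ with $\mu \in M_G(X)$ is dominated by $\Phi$.

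Next I would construct a zero entropy $G$-system outgrowing $\Phi$. Here non-periodicity enters: choose $a \in G$ of infinite order and set $H = \ang{a} \cong \mbbZ$. For $\mbbZ$-actions one can realize zero entropy systems with arbitrarily fast subexponential scaling entropy, so I would take such an $H$-system $(Y_0, \nu_0, H)$ and form the coinduced action $\Ind_H^G(Y_0, \nu_0)$, which remains a measure-preserving $G$-action; one verifies that coinduction along an infinite-order element preserves vanishing entropy. The scaling entropy of the coinduced system is bounded from below via its product structure over the cosets $G/H$: intersecting $F_n$ with each $H$-coset yields long intervals, so $\rho_n$ decomposes as an average of essentially independent copies of the interval-averaged metric of $(Y_0,\nu_0)$, one per coset. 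Applying Lemma~\ref{lm_estimate} to this averaging of independent metrics gives the required lower bound, and choosing $(Y_0,\nu_0)$ as in the series of Definition~\ref{def_almost} makes the resulting scaling entropy exceed $\Phi$. Applying the hypothesis of the theorem to this system $(Y,\nu,G)$ produces $\mu \in M_G(X)$ with $(X,\mu,G) \cong (Y,\nu,G)$; invariance of scaling entropy then bounds the scaling entropy of $(Y,\nu,G)$ by $\Phi$, contradicting its construction, so $h_{top}(X,G) > 0$.

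I expect the main obstacle to be the lower bound in the third step: one must show that averaging over $F_n$ genuinely decouples across the $H$-cosets so that Lemma~\ref{lm_estimate} applies, which requires controlling the boundary of $F_n$ through the F\o lner property and handling the correlations between coordinates introduced by the coinduced $G$-action. A secondary delicate point is the matching of the scale $\eps$ between the abstract scaling entropy of $(Y,\nu,G)$ and the one computed through the ambient metric $d$ after the isomorphism, which must be arranged so that the comparison with $\Phi$ remains valid.
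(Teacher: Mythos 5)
Your proposal follows essentially the same route as the paper: contradiction via a uniform subexponential bound $\phi(n)=o(\abs{F_n})$ on $\log\spn(d,F_n,\eps)$ extracted by a diagonal argument, combined with a zero entropy system of ``almost complete growth'' built by coinducing a suitably fast $\mbbZ$-system along an infinite-order element and bounding its scaling entropy below with Lemma~\ref{lm_estimate}. The two obstacles you flag are exactly the paper's Lemma~\ref{lm_reduction} (adjusting the F\o lner sets so the coset slices are almost invariant) and Lemma~\ref{lm_example} (realizing the prescribed lower bounds on those slices via the adic transformation), so the strategy is the intended one.
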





\section{Scaling entropy} 

\subsection{Epsilon--entropy and scaling entropy sequence}

The main tool in the proof of Theorem~\ref{thm_univ} is the notion of scaling entropy introduced by A.~Vershik in~\cite{V2010, V1,V3}. The closely related notion of measure--theoretic complexity appears in the works by S.~Ferenczi~\cite{F}, A.~Katok and J.-P.~Thouvenot~\cite{KT} and uses symbolic encoding and Hamming metrics. Vershik's approach is based on the dynamics of functions of several variables, namely \emph{admissible semimetrics}\footnote{Occasionally the term "quasimetric" is used instead of "semimetric".} (see~\cite{VPZ} for details). The theory of scaling entropy was developed by A.~Vershik, F.~Petrov, and P.~Zatitskiy  in~\cite{PZ, VPZ, Z1, Z2}. Let us recall the basic concepts and statements of this theory.   

Let $\rho\colon (X^2, \mu^2) \to [0, +\infty)$ be a measurable semimetric on a measure space $\lr{X,\mu}$. That is, $\rho$~is a non--negative symmetric function which is measurable with respect to $\mu^2$ and satisfies the triangle inequality.  For a positive $\eps$, we define its \emph{$\eps$--entropy} as follows. Let $k$ be the minimal positive integer such that $X$ can be represented as a union of measurable subsets $X_0, X_1, \ldots, X_k$ with $\mu(X_0) < \eps$ and $\diam_\rho(X_i) < \eps$ for all $i>0$. Put
\[
\mbbH_\eps(X, \mu, \rho) = \log_2 k.
\]
If there is no such finite $k$, define $\mbbH_\eps(X, \mu, \rho) = + \infty$.

We call a semimetric \emph{admissible} if it is separable on some subset  of full measure. Properties of admissible semimetrics are studied in detail in~\cite{VPZ}. In particular, it is proved that a semimetric is admissible if and only if all its $\eps$--entropies are finite for all $\eps > 0$. 

Suppose that $G \acts{} \lr{X,\mu}$ is a measure--preserving action of a countable group $G$ on a Lebesgue space $\lr{X,\mu}$. For an element $g \in G$ denote a shifted semimetric $g^{-1}\rho$:  $g^{-1}\rho(x,y) = \rho(gx, gy)$, where $x,y \in X$. Evidently, if $\rho$ is admissible, then $g^{-1}\rho$ is admissible as well. 

Let us fix a sequence $\lambda = \{S_n\}_{n=1}^{\infty}$ of non--empty finite subsets of~$G$. Here and in what follows, we call it the \emph{equipment} of the group. A measurable semimetric $\rho$ is called \emph{generating} if all its shifts by elements of $\cup_n S_n$ together separate points up to a null set. This means that there exists a subset $X_0 \subset X$ of full measure such that for any pair of distinct points $x, y \in X_0$ there is an element $g \in \cup_n S_n$ with $g^{-1}\rho(x, y) > 0$. Note that any actual (measurable) metric is always generating. Next, define an averaged by~$S_n$ semimetric $G^n_{av}\rho$ in a natural way:  
\[
G^n_{av} \rho (x,y) = \frac{1}{\abs{S_n}}\suml_{g\in S_n} \rho(gx, gy), \qquad x,y \in X.
\]
Sometimes we will emphasize the set over which the averaging is taken. In this case, we will write~$G^{S_n}_{av}\rho$ instead of~$G^n_{av}\rho$.

Consider then the following function 
\[
\Phi(n, \eps) = \mbbH_\eps\lr{X, \mu, G^n_{av}\rho}.
\]
Actually, the function $\Phi(n,\eps)$ depends on $n$, $\eps$, and semimetric $\rho$. However, its asymptotic behaviour is supposed to be independent of~$\rho$ and~$\eps$ in some sense (see~\cite{V1, V3}). Let us recall a definition proposed in~\cite{PZ, Z2}. 
\begin{defn}\label{def_scalingseq}
        Let $G \acts{} (X,\mu)$ be a measure--preserving action of a group $G$ equipped with $\lambda$ and $\rho$ be an admissible semimetric on $(X,\mu)$. We call a sequence $\{h_n\}_{n =1 }^\infty$ \emph{a scaling entropy sequence} of this action of the equipped group $G$ and the semimetric $\rho$ if for all sufficiently small $\eps>0$ the following asymptotic relation holds: 
        \[
        \mbbH_\eps(X,\mu, G^n_{av}\rho) \asymp h_n.
        \]
\end{defn}
Here, for two functions $\phi(n)$ and $\psi(n)$ relation $\phi(n) \asymp \psi(n)$ means that there are two positive constants $c$ and $C$ such that  $c \phi(n) \le \psi(n) \le C\phi(n)$. Note that it makes sense to consider the whole class of equivalent scaling entropy sequences. Indeed, it is easy to see from Definition~\ref{def_scalingseq} that if $\{h_n\}$ is a scaling sequence of the action and $h_n^\prime\asymp h_n$, then the sequence $\{h_n^\prime\}$ is scaling as well. 

{In \cite{Z1, Z2}, P.~Zatitskiy, proving a conjecture by A.~Vershik, showed that if a sequence $\{h_n\}$ is a scaling entropy sequence for some summable admissible metric $\rho$, then it is a scaling sequence for any other such metric}. The summability of $\rho$ means that it has finite integral over $X^2$, i.\,e., $\rho \in L^1(X^2,\mu^2)$. In particular, any bounded measurable semimetric is summable. This independence holds for any equipment $\lambda$ but $\rho$ must be an actual metric. The case of a generating semimetric imposes some additional requirements on $\lambda$ (see~\cite{Z2} for details). 
\begin{defn}\label{def_equip}
        Equipment $\lambda = \{S_n\}$ of a countable group $G$ is called \emph{suitable} if for any $g\in \cup S_n$ and $\delta > 0$ there exists $k \in \mbbN$ such that for all $n\in \mbbN$ there are $g_1, \ldots, g_k \in G$ with
        \[
        \bigg| gS_n \setminus \bigcup\limits_{j=1}^k S_n g_j\bigg| \le \delta \abs{S_n}.
        \]
\end{defn}
Note that any left F\o lner sequence forms suitable equipment. For any action of a suitably equipped group, it is proved in~\cite{Z1, Z2} that if a sequence $\{h_n\}$ is a scaling sequence for some generating admissible summable semimetric, then it is a scaling entropy sequence for all such semimetrics. Therefore, the class of scaling entropy sequences does not depend on the choice of semimetric  and forms a measure--theoretic invariant of an action of an equipped group. 

We should note that a scaling entropy sequence may depend on the choice of equipment. It is shown in~\cite{PZ} that under certain conditions for the equipment if a scaling sequence exits, then one could choose a subadditive increasing function $f\colon \mathbb{N} \to \mathbb{N}$ with $h_n\asymp f({\abs{S_n}})$. However, it is now unknown whether such $f$ can be chosen independently of the equipment. Moreover, it is unclear if the stability (see Section~\ref{sec_unstable}) depends on the choice of equipment. 

It is proved in \cite{PZ} that in the case of one transformation (i.\,e., action of $\mbbZ$ with the standard equipment $S_n = \{-n, \ldots, n\}$) if the class of scaling entropy sequences is non--empty, then it contains an increasing subadditive function. In~\cite{Z2}, the explicit examples of automorphisms with a given increasing subadditive scaling entropy sequence are given. In addition, the actions of $\bigoplus \mathbb Z_2$ with a given scaling sequence of an intermediate growth were constructed in~\cite{Z2}. This construction can be easily generalized to the case of the group $\bigoplus_{k} \mathbb{Z}_{r_k}$ for an arbitrary family of positive integers~$\{r_k\}$.

\subsection{Stable and unstable systems. Examples of almost complete growth} \label{sec_unstable}

It was recently shown by the author~\cite{Vep} that a scaling entropy sequence may not exist even for one automorphism. We will call a system \emph{stable} if its class of scaling entropy sequences is not empty. That is, in~\cite{Vep}, the examples of \emph{unstable}  {ergodic} systems were constructed.

However, the notion of a scaling sequence can be generalized to unstable cases. Let us define a partial order $\preceq$ on the set of functions from $\mathbb{N}\times \mathbb{R}_+$ to $\mathbb{R}_+$ {(that decrease with respect to their second arguments)} as follows:
\begin{equation}
    \Psi \preceq \Phi \Longleftrightarrow \forall \eps > 0\ \exists \delta > 0 \ \Psi(n, \eps) \lesssim \Phi(n, \del).
\end{equation}
For two sequences $\phi(n)$ and $\psi(n)$,  we write $\phi \lesssim \psi$ if there is a positive constant $C$ such that $\phi(n) \le C \psi(n)$ for all $n \in \mbbN$. We will call two functions $\Psi$ and $\Phi$ equivalent if $\Psi \preceq \Phi$ and $\Phi \preceq \Psi$ hold simultaneously. We denote the equivalence class of this relation containing  $\Phi$ by~$\wpart{\Phi}$.

Let $G \curvearrowright (X, \mu)$ be a measure--preserving action of a suitably equipped group $(G,\lambda)$ and $\rho$ be an admissible generating summable semimetric on $(X, \mu)$. In~\cite{Z1, Z2}, it is proved (see Lemma~9 in~\cite{Z1} and similar statements in~\cite{Z2}) that the equivalence class of function $\Phi_\rho(n, \eps) = \mbbH_\eps\lr{X, \mu, G^n_{av}\rho}$ does not depend on a semimetric and forms an invariant  $\mathcal{H}\lr{X,\mu, G, \lambda}$ of the measure--preserving action:
\begin{equation}
    \mathcal{H}\lr{X, \mu, G, \lambda} = \Big[\Phi_\rho(n, \eps)\Big].
\end{equation}
Note that the system is stable if and only if $\mathcal{H}\lr{X, \mu, G, \lambda}$ contains a function $\Phi(n, \eps) = \phi(n)$ independent of $\eps$.

Scaling entropy may also depend on the choice of equipment. However, if $\lambda = \{S_n\}$ and $\theta = \{W_n\}$ are such that $|S_n \triangle W_n| = o(|S_n|)$, then $\mathcal{H}\lr{X, \mu, G, \lambda}=\mathcal{H}\lr{X, \mu, G, \theta}$ for any measure--preserving system~$(X, \mu, G)$.

Theorem~\ref{thm_kolm} below states that for any amenable group $G$ equipped with a F\o lner sequence $\lambda = \{F_n\}$, any $\Phi \in \mathcal{H}\lr{X, \mu, G, \lambda}$, and any positive $\eps$ the  following asymptotic relation holds:
\begin{equation}
    \label{intro_eq_maxgrowth}
\Phi(n, \eps) \lesssim \abs{F_n}.
\end{equation}
The equivalence in~\eqref{intro_eq_maxgrowth} holds if and only if the amenable measure entropy of $(X,\mu,G)$ is positive. In~\cite{Z2}, the examples of {ergodic} stable $\mbbZ$--actions with almost complete growth (see Definition~\ref{def_almost}) with respect to the standard equipment are given.  

The main theorem of this work (Theorem~\ref{thm_amen}) states that such {ergodic} actions exist for any countable non--periodic amenable group with arbitrary F\o lner equipment. The non--existence of a universal zero entropy system for such groups is proved in Theorem~\ref{thm_univ} by means of constructed actions of almost complete growth.  

\section{Scaling Entropy and Amenable Measure Entropy}

In this section, we study the relation between the notions of scaling entropy and amenable measure entropy.

Let us state several technical lemmas that we use in the proof of Theorem~\ref{thm_kolm} {below}. Note that for any measurable partition $\xi$ of a measure space $(X, \mu)$ there is a naturally defined \emph{cut semimetric} $\rho_{\xi}(x,y)$ which equals zero if $x$ and $y$ both lie in the same cell  of $\xi$ and one otherwise. If the partition is finite (or countable) up to a null set then the corresponding cut semimetric is admissible. The following lemma (see \cite{Z1}) links the $\eps$--entropy of $\rho_\xi$ with the Shannon entropy of~$\xi$.    
\begin{lm}\label{lm_partitions}
The following relations between $\eps$--entropy and Shannon entropy hold. 
    \begin{enumerate}
        \item
        For any measurable partition $\xi$ of a standard measure space $\lr{X,\mu}$ and any $\eps > 0$  
        \[
        \mbbH_\eps\lr{X, \mu, \rho_\xi} \le \frac{H(\xi)}{\eps},
        \]
        where $\rho_\xi$ is a semimetric corresponding to $\xi$.
        
        \item 
        Let $m,k \in \mbbN$ and $\{\xi_i\}_{i=1}^k$ be a family of finite measurable partitions of $\lr{X,\mu}$ such that each~$\xi_i$ consists of not more than~$m$ cells.  Let $\xi = \bigvee_{i=1}^k\xi_i$ be the refinement of these partitions and $\rho = \frac{1}{k} \sum_{i=1}^k\rho_{\xi_i}$ be the averaging of corresponding semimetrics. Then for any $\eps \in (0, \frac{1}{2})$ the following estimate holds:
        \[
        \frac{H(\xi)}{k} \le \frac{\mbbH_\eps(X, \mu, \rho)}{k} + 2\eps \log m - \eps \log \eps - (1 - \eps)\log(1 - \eps) + \frac{1}{k}.
        \]
    \end{enumerate}
\end{lm}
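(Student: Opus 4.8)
The two parts are independent; in both the key move is to read the cut semimetric combinatorially.

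\medskip

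\emph{Part (1).} The plan is to exhibit an explicit covering. Enumerate the cells $C_1, C_2, \ldots$ of $\xi$ in non-increasing order of measure, $p_1 \ge p_2 \ge \cdots$, and let $k$ be the least index for which the tail $\sum_{i>k} p_i$ falls below $\eps$. Since every cell has $\diam_{\rho_\xi} = 0 < \eps$, taking $X_i = C_i$ for $1 \le i \le k$ and collecting the remaining cells into $X_0$ is an admissible covering, so $\mbbH_\eps(X,\mu,\rho_\xi) \le \log k$. It remains to bound $k$. Minimality yields $\sum_{i \ge k} p_i \ge \eps$, while $p_k \le 1/k$ because $p_1 + \cdots + p_k \le 1$. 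Estimating the Shannon entropy from below by its tail,
\[
H(\xi) \ge \sum_{i \ge k} p_i \log\frac{1}{p_i} \ge \log\frac{1}{p_k}\sum_{i\ge k} p_i \ge \eps \log k,
\]
which rearranges to the claim (all logarithms taken to base $2$, so that the constants match $\mbbH_\eps = \log_2 k$).

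\medskip

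\emph{Part (2), setup.} The idea is to identify $\rho$ with a normalized Hamming metric. Label the (at most $m$) cells of each $\xi_i$ by $\{1,\ldots,m\}$ and encode a point $x$ by the word $\bar a(x) = (a_1(x), \ldots, a_k(x))$, where $a_i(x)$ names the cell of $\xi_i$ containing $x$. Then $\xi$ is precisely the partition into level sets of $\bar a$, so $H(\xi) = H(\bar a)$, and by construction $\rho(x,y) = \frac1k \#\{i : a_i(x) \ne a_i(y)\}$ is the normalized Hamming distance between codewords. Fix a covering $X_0, X_1, \ldots, X_K$ realizing $\mbbH_\eps(X,\mu,\rho)$, with $K = 2^{\mbbH_\eps(X,\mu,\rho)}$, $\mu(X_0) < \eps$, and $\diam_\rho(X_j) < \eps$ for $j>0$, and let $Z(x)$ record the index of a piece containing $x$.

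\medskip

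\emph{Part (2), estimate.} Conditioning on $Z$ confines the codeword, so I decompose $H(\bar a) \le H(Z) + H(\bar a \mid Z)$. For the first term, $H(Z) \le \log(K+1) \le \mbbH_\eps(X,\mu,\rho) + 1$. For the second, on $\{Z=0\}$ (of probability $<\eps$) I only use $H(\bar a \mid Z=0) \le k\log m$, while on each $\{Z=j\}$ with $j>0$ the bound $\diam_\rho(X_j) < \eps$ forces all codewords into one Hamming ball of radius $<\eps k$, so $\bar a$ takes at most $\sum_{t<\eps k}\binom{k}{t}(m-1)^t$ values there. Hence $H(\bar a\mid Z) \le \eps k\log m + \log\!\big(\sum_{t<\eps k}\binom{k}{t}(m-1)^t\big)$. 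The one analytic input, valid for $\eps < \tfrac12$, is the volume estimate
\[
\sum_{t<\eps k}\binom{k}{t}(m-1)^t \le m^{\eps k}\,2^{\,k(-\eps\log\eps-(1-\eps)\log(1-\eps))},
\]
coming from $(m-1)^t \le m^{\eps k}$ together with the standard bound $\sum_{t\le \eps k}\binom{k}{t}\le 2^{k(-\eps\log\eps-(1-\eps)\log(1-\eps))}$. Summing the pieces and dividing by $k$ gives the stated inequality.

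\medskip

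The \emph{main obstacle} is the Hamming-ball volume estimate and matching it to the binary-entropy correction term; the rest is the entropy chain rule and a count of codewords. Some care is needed with the two sources of the $2\eps\log m$ term — one factor $\eps\log m$ comes from the exceptional piece $X_0$ (via $\mu(X_0)<\eps$), the other from the factor $(m-1)^t \le m^{\eps k}$ inside a genuine ball — and with fixing the logarithm base as $2$ throughout so that the constants align with $\mbbH_\eps = \log_2 k$.
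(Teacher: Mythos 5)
Your proof is correct. The paper itself gives no proof of this lemma (it is quoted from \cite{Z1}), but both parts of your argument are sound and are essentially the standard ones: for part (1), the tail estimate $H(\xi)\ge \log(1/p_k)\sum_{i\ge k}p_i\ge \eps\log k$ on the cells ordered by measure; for part (2), the Hamming encoding, the chain-rule decomposition $H(\bar a)\le H(Z)+H(\bar a\mid Z)$ over the covering realizing $\mbbH_\eps$, and the ball-volume bound $\sum_{t<\eps k}\binom{k}{t}(m-1)^t\le m^{\eps k}2^{k(-\eps\log\eps-(1-\eps)\log(1-\eps))}$, which together reproduce the stated constants exactly (including the two separate sources of the $2\eps\log m$ term and the $+1$ from $\log(K+1)\le \log K+1$).
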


The next lemma (see \cite[Lemma 1]{PZ}) gives an upper bound for the $\eps$--entropy of an averaged semimetric.
\begin{lm}\label{lm_upperbound}
    Let $\rho_1, \rho_2, \ldots, \rho_k$ be admissible semimetrics on $(X, \mu)$ with $\rho_i \le 1$ for all $i = 1, \ldots, k$. Assume that $\eps \in (0,1)$ satisfies $\mbbH_\eps(X, \mu, \rho_i) > 0$. Then the following inequality holds
    \[
     	\mbbH_{2\sqrt{\eps}}\left(X, \mu, 
     	\frac{1}{k} \suml_{ i= 1}^{k}\rho_i \right) \le
     	2 \suml_{ i= 1}^{k} \mbbH_\eps(X, \mu, \rho_i). 
    \] 
\end{lm}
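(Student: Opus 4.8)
The plan is to run a coding (pigeonhole) argument: from near-optimal covers for the individual $\rho_i$ I will build a single cover adapted to the average $\frac{1}{k}\suml_{i=1}^k\rho_i$, and the whole game is to control how the exceptional ``garbage'' sets of the individual covers accumulate. First I would fix, for each $i$, a cover $X = X_0^{(i)} \cup X_1^{(i)} \cup \dots \cup X_{k_i}^{(i)}$ realizing $\mbbH_\eps(X,\mu,\rho_i)$, so that $\mu(X_0^{(i)}) < \eps$, $\diam_{\rho_i}(X_j^{(i)}) < \eps$ for $j \ge 1$, and $k_i = 2^{\mbbH_\eps(X,\mu,\rho_i)}$. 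Since all the $\eps$-entropies are assumed positive, each $k_i \ge 2$. To every point $x$ I attach the code $c(x) = \lr{c_1(x), \dots, c_k(x)}$, where $c_i(x) \in \{0,1,\dots,k_i\}$ records the cell $X_{c_i(x)}^{(i)}$ containing $x$, with the value $0$ flagging that $x$ fell into the garbage of the $i$-th cover.

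The main obstacle is that the union of the garbage sets $X_0^{(i)}$ may have total measure as large as $k\eps$, which dwarfs the permitted $2\sqrt{\eps}$; so one cannot simply discard every point that ever lands in garbage. This is exactly where averaging, together with the coarser scale $2\sqrt{\eps}$, buys the needed slack. Setting $B(x) = \{i : c_i(x) = 0\}$, I would integrate to get $\intl_X \abs{B(x)}\,\dint\mu(x) = \suml_{i=1}^k \mu(X_0^{(i)}) < k\eps$, whence Markov's inequality bounds the bad set $\{x : \abs{B(x)} \ge \sqrt{\eps}\,k\}$ by measure $\sqrt{\eps} < 2\sqrt{\eps}$. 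I would declare this bad set to be the garbage $X_0$ of the new cover.

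On its complement I would group points according to their code. If two such points $x,y$ carry the same code, then for every coordinate $i$ with $c_i > 0$ they lie in a common $\rho_i$-cell of $\rho_i$-diameter $< \eps$, while the remaining fewer than $\sqrt{\eps}\,k$ coordinates (those in $B(x) = B(y)$) each contribute at most $1$ because $\rho_i \le 1$. Therefore
\[
\frac{1}{k}\suml_{i=1}^k \rho_i(x,y) < \frac{1}{k}\lr{k\eps + \sqrt{\eps}\,k} = \eps + \sqrt{\eps} < 2\sqrt{\eps},
\]
so each code-class has $\lr{\tfrac{1}{k}\sum\rho_i}$-diameter strictly below $2\sqrt{\eps}$, as required for the cover at scale $2\sqrt{\eps}$.

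It remains only to count. The codes live in $\prod_{i=1}^k\{0,1,\dots,k_i\}$, so the number of nonempty classes is at most $\prod_{i=1}^k(k_i+1)$. Since $k_i \ge 2$ forces $k_i+1 \le k_i^2$, taking $\log_2$ gives
\[
\mbbH_{2\sqrt{\eps}}\lr{X,\mu,\tfrac{1}{k}\suml_{i=1}^k\rho_i} \le \suml_{i=1}^k \log_2(k_i+1) \le 2\suml_{i=1}^k \log_2 k_i = 2\suml_{i=1}^k \mbbH_\eps(X,\mu,\rho_i),
\]
which is the claimed bound. The only genuinely delicate point is the garbage bookkeeping in the second paragraph; everything else is elementary once the scale has been coarsened from $\eps$ to $2\sqrt{\eps}$.
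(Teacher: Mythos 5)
Your argument is correct: the Markov bound on the number of ``garbage'' coordinates, the coding by cells, the diameter estimate $\eps+\sqrt{\eps}<2\sqrt{\eps}$, and the count $\prod(k_i+1)\le\prod k_i^2$ (valid since each $k_i\ge 2$) all check out. The paper does not prove this lemma itself but cites it as Lemma~1 of \cite{PZ}, and your proof is essentially the standard argument given there.
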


The theorem below relates notions of scaling entropy and classical measure entropy for amenable groups. The similar result for one automorphism was proved in~\cite{Z1}.   

\begin{thm}\label{thm_kolm}
Let $G \acts{} \lr{X, \mu}$ be a measure--preserving action of an amenable group and $\lambda = \{F_n\}$ be a F\o lner sequence in $G$. Choose some $\Phi \in \mathcal{H}\lr{X, \mu, G, \lambda}$.
\begin{enumerate}
    \item 
    Assume that the amenable measure entropy $h(X,\mu, G)$ is positive. Then~$\lr{X, \mu, G}$ is stable and for all sufficiently small $\eps > 0$ 
    \[
    \Phi\lr{n, \eps} \asymp \abs{F_n}.
    \]
    \item 
    If $h(X, \mu, G) = 0$, then for all positive $\eps$
    \[
    \Phi\lr{n, \eps} = o \lr{\abs{F_n}}.
    \]
\end{enumerate}
\end{thm}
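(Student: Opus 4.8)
The plan is to prove both directions by relating the $\eps$-entropy of the averaged cut semimetric $G^n_{av}\rho_\xi$ to the Shannon entropy of the refinement $\bigvee_{g \in F_n} g^{-1}\xi$, using the two technical lemmas as the bridge. Since the invariant $\mathcal{H}(X,\mu,G,\lambda)$ is independent of the choice of admissible generating summable semimetric, I am free to fix a convenient one. The natural choice is a cut semimetric $\rho_\xi$ associated to a finite (or countable) generating partition $\xi$; if the space has no finite generating partition, I would approximate by a sequence of finite partitions whose join generates, and pass to the limit at the end. The key observation is that $G^n_{av}\rho_\xi = \frac{1}{\abs{F_n}}\sum_{g \in F_n} g^{-1}\rho_\xi = \frac{1}{\abs{F_n}}\sum_{g \in F_n} \rho_{g^{-1}\xi}$, so this averaged semimetric is exactly of the form treated in Lemma~\ref{lm_partitions}(2) and Lemma~\ref{lm_upperbound}, with the family $\{g^{-1}\xi\}_{g \in F_n}$ and $k = \abs{F_n}$.

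\textbf{Part (1).} First I would establish the lower bound $\mathbb{H}_\eps(X,\mu,G^n_{av}\rho_\xi) \gtrsim \abs{F_n}$ when $h(X,\mu,G) > 0$. Applying Lemma~\ref{lm_partitions}(2) with $k = \abs{F_n}$ and the partitions $\xi_i = g^{-1}\xi$ (each having at most $m$ cells, where $m$ is the fixed number of cells of $\xi$), I get
\[
\frac{1}{\abs{F_n}} H\!\left(\bigvee_{g \in F_n} g^{-1}\xi\right) \le \frac{\mathbb{H}_\eps(X,\mu,G^n_{av}\rho_\xi)}{\abs{F_n}} + 2\eps\log m - \eps\log\eps - (1-\eps)\log(1-\eps) + \frac{1}{\abs{F_n}}.
\]
The left-hand side converges to $h(\xi)$, which can be taken positive (by choosing $\xi$ to witness positive entropy, or noting $\sup_\xi h(\xi) = h(X,\mu,G) > 0$). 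Since the error terms $2\eps\log m - \eps\log\eps - (1-\eps)\log(1-\eps)$ tend to $0$ as $\eps \to 0$, fixing $\eps$ small enough makes them strictly smaller than $h(\xi)/2$, which forces $\mathbb{H}_\eps(X,\mu,G^n_{av}\rho_\xi) \gtrsim \abs{F_n}$ for all large $n$. Combined with the upper bound $\Phi(n,\eps) \lesssim \abs{F_n}$ from~\eqref{intro_eq_maxgrowth}, this gives $\Phi(n,\eps) \asymp \abs{F_n}$. Stability follows because the witnessing function $\phi(n) = \abs{F_n}$ is independent of $\eps$.

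\textbf{Part (2).} For the case $h(X,\mu,G) = 0$, I would use Lemma~\ref{lm_upperbound} to push the $\eps$-entropy of the averaged semimetric down to the sum of individual $\eps$-entropies, but this alone is too lossy. Instead, the cleaner route is again via Lemma~\ref{lm_partitions}(2): rearranging it gives, for every finite partition $\xi$ and every $\eps \in (0,\tfrac12)$,
\[
\frac{\mathbb{H}_\eps(X,\mu,G^n_{av}\rho_\xi)}{\abs{F_n}} \ge \frac{1}{\abs{F_n}}H\!\left(\bigvee_{g\in F_n}g^{-1}\xi\right) - 2\eps\log m + \eps\log\eps + (1-\eps)\log(1-\eps) - \frac{1}{\abs{F_n}},
\]
but for the \emph{upper} bound I need the reverse inequality from Lemma~\ref{lm_partitions}(1). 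Applying part (1) of that lemma to the single semimetric $G^n_{av}\rho_\xi$ viewed through the partition $\bigvee_{g \in F_n} g^{-1}\xi$—or more directly bounding $\mathbb{H}_\eps(X,\mu,G^n_{av}\rho_\xi)$ by $\frac{1}{\eps}H(\bigvee_{g\in F_n}g^{-1}\xi)$—yields $\Phi(n,\eps) \le \frac{1}{\eps}H(\bigvee_{g\in F_n}g^{-1}\xi)$. Since $h(\xi) = \lim_n \frac{1}{\abs{F_n}}H(\bigvee_{g\in F_n}g^{-1}\xi) = 0$ for every finite $\xi$ when the entropy vanishes, I conclude $\frac{1}{\abs{F_n}}H(\bigvee g^{-1}\xi) = o(1)$, hence $\Phi(n,\eps) = o(\abs{F_n})$ for each fixed $\eps > 0$, as required.

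\textbf{The main obstacle} I anticipate is the reduction to a \emph{finite} generating partition. The tidy arguments above use that $\xi$ has a fixed finite number of cells $m$ and realizes the entropy (for part 1) or that $h(\xi) = 0$ for the chosen generating $\xi$ (for part 2). A general zero-entropy system need not admit a finite generator, so I would approximate the reference semimetric by cut semimetrics of finite partitions $\xi^{(j)}$ increasing to a generating sequence, carefully control how the error term $2\eps\log m$ in Lemma~\ref{lm_partitions}(2) grows with the number of cells $m = m(j)$, and invoke the equipment-independence of $\mathcal{H}(X,\mu,G,\lambda)$ (valid for suitable equipment, which a F\o lner sequence provides) to transfer the estimate from cut semimetrics back to an arbitrary admissible generating summable semimetric. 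Managing the interplay between $\eps$, the cell count $m$, and the generating approximation is the delicate point; everything else is a direct application of the two lemmas together with the standard fact that $\frac{1}{\abs{F_n}}H(\bigvee_{g\in F_n}g^{-1}\xi) \to h(\xi)$ along a F\o lner sequence.
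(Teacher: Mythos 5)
Your proposal is correct and follows essentially the same route as the paper: Lemma~\ref{lm_partitions}(2) applied to the family $\{g^{-1}\xi\}_{g\in F_n}$ gives the lower bound when $h(X,\mu,G)>0$, Lemma~\ref{lm_upperbound} applied to the shifts $g^{-1}\rho$ gives $\mbbH_{2\sqrt{\eps}}\lr{X,\mu,G^n_{av}\rho}\le 2\abs{F_n}\,\mbbH_\eps(X,\mu,\rho)$ and hence the upper bound (which you should derive this way rather than quoting~\eqref{intro_eq_maxgrowth}, since that display is itself a forward restatement of the theorem), and Lemma~\ref{lm_partitions}(1) together with $H\lr{\bigvee_{g\in F_n}g^{-1}\xi}=o\lr{\abs{F_n}}$ handles the zero-entropy case. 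The ``main obstacle'' you anticipate dissolves more simply than by approximation: in part (1) the paper takes an arbitrary finite $\xi$ with $h(\xi)>0$ (no generation required) and adds an auxiliary admissible metric $\omega$ to $\rho_\xi$, so that $\rho_\xi+\omega$ is generating and its $\eps$-entropy dominates that of $\rho_\xi$ by monotonicity, which transfers the lower bound to every $\Phi\in\mathcal{H}\lr{X,\mu,G,\lambda}$; in part (2) one uses a countable generating partition of finite entropy directly in Lemma~\ref{lm_partitions}(1), which imposes no bound on the number of cells, so no control of the error term $2\eps\log m$ is needed there.
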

\begin{proof}
    Suppose that the classical entropy $h(X, \mu, G)$ is positive. Let $\xi$ be a finite measurable partition and $\zeta_n = \bigvee_{g \in F_n} g^{-1}\xi$. Let  $\rho_\xi$ be the corresponding to $\xi$ cut semimetric. Also, for $g \in F_n$ set $\xi_g = g^{-1}\xi$ and  $m = \abs{\xi} = \abs{\xi_g}$. Due to part~2 of Lemma~\ref{lm_partitions}, we have: 
    \begin{equation}\label{thm_kolm_eq_positive}
    \frac{H\left(\zeta_n\right)}{\abs{F_n}} \le 
    \frac{\mbbH_\eps(X, \mu, G_{av}^n\rho_\xi)}{\abs{F_n}} + 
    2\eps \log m - \eps \log \eps - (1 - \eps)\log(1 - \eps) + \frac{1}{\abs{F_n}}.
    \end{equation}
    Since $h(X, \mu, G)> 0$, we can choose $\xi$ satisfying  
    \[
    h(\xi) = \liml_{n\to+\infty} \frac{1}{\abs{F_n}}H\left(\zeta_n\right) > 0.
    \]
    Hence, while $\eps$ is small enough, inequality~\eqref{thm_kolm_eq_positive} implies 
    \begin{equation}
        \mbbH_\eps(X, \mu, G_{av}^n\rho_\xi) \gtrsim \abs{F_n}.
    \end{equation}
    Note that $\rho_\xi$ may not be generating (it happens if and only if $\xi$ is not generating). We can easily overcome this by adding some admissible metric  $\omega$ to $\rho_\xi$. Evidently,  $\omega + \rho_\xi$ is now generating and $\Psi(n, \eps) = \mbbH_\eps(X, \mu, G_{av}^n(\rho_\xi + \omega))$ lies in $\mathcal{H}(X,\mu, G, \lambda)$. At the same time,
    \[
    \Psi(n, \eps) \ge \mbbH_\eps(X, \mu, G_{av}^n\rho_\xi) \gtrsim \abs{F_n}.
    \]
    
    The upper bound for the asymptotics of the scaling entropy instantly follows from Lemma~\ref{lm_upperbound}. Indeed, let $\rho \le 1$ be an admissible generating semimetric. Using Lemma~\ref{lm_upperbound} for semimetrics $g^{-1}\rho$, where $g \in F_n$, we obtain   
    \[
    \mbbH_{2\sqrt{\eps}}\lr{X, \mu, G_{av}^n \rho} \le 2\abs{F_n} {\mbbH_\eps(X, \mu, \rho)}.
    \]
    Therefore, for any $\Phi \in \mathcal{H}(X,\mu, G, \lambda)$ and any sufficiently small (and thus for all as well) $\eps > 0$,
    \[
    \Phi(n, \eps) \lesssim \abs{F_n}.
    \]
    The first part is proved.
    
    Now assume that $h(X, \mu, G) = 0$. Consider a generating partition $\xi$ with finite entropy and the corresponding (generating) semimetric $\rho_\xi$. Let  $\zeta_n = \bigvee_{g \in F_n} g^{-1}\xi$ be as above. The first part of Lemma~\ref{lm_partitions} implies the following inequality 
    \[
    \mbbH_\eps\lr{X, \mu, G_{av}^n\rho_\xi} \le \mbbH_\eps\lr{X, \mu, \rho_{\zeta_n}} \le \frac{H(\zeta_n)}{\eps}. 
    \]
    However, $h(X, \mu, G) = 0$ means that $H(\zeta_n) = o\lr{\abs{F_n}}$. Therefore, for any $\Phi \in \mathcal{H}(X,\mu, G, \lambda)$ and any $\eps > 0$
    \[
    \Phi(n, \eps) = o\lr{\abs{F_n}}.
    \]
    The second part is proved.
\end{proof}

\section{Proof of the non--existence of a universal system}

In this section, we prove the non--existence of a universal zero entropy system for actions of a non--periodic amenable group. However, the proof that we provide deals with a wider class which, we believe, coincides with the class of all amenable groups.\footnote{At least, it contains $\bigoplus \mathbb Z_2$ which is a torsion group.} 

\begin{defn}\label{def_almost}
        We say that a group $G$ equipped with $\lambda = \{F_n\}$ \emph{admits actions of almost complete growth} if for any non--negative function $\phi(n) = o\lr{\abs{F_n}}$ there exists a measure--preserving system~$(X, \mu, G)$ such that for any $\Phi \in \mathcal{H}(X, \mu, G, \lambda)$ and for any sufficiently small $\eps$ the following holds: 
        \[
         \Phi(n,\eps) \not \lesssim \phi(n) \text{ and } \Phi(n,\eps) =o(|{F_n}|).
        \]
\end{defn}
By virtue of Theorem~\ref{thm_kolm}, the second condition in Definition~\ref{def_almost}  is equivalent to $h(X, \mu, G) = 0$ for amenable groups.

\begin{thm}\label{thm_univ1}
    Let $G \acts{} (X,d)$ be a continuous action of an amenable group $G$ on a compact metric space. Suppose that $G$ admits {ergodic} actions of almost complete growth for some F\o lner equipment $\theta = \{W_n\}$. Assume that for any {ergodic} measure--preserving system $(Y, \nu, G)$ with zero entropy there exists an invariant measure $\mu$ on $X$ with  
    \[
    (X, \mu, G) \cong (Y, \nu, G).
    \]
    Then the topological entropy of $(X,d,G)$ is positive.  
\end{thm}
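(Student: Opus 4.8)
The plan is to argue by contradiction, combining the hypothesis that $G$ admits actions of almost complete growth (Definition~\ref{def_almost}) with the variational principle and Theorem~\ref{thm_kolm}. Suppose the topological entropy $h_{top}(X, d, G)$ were zero. The key tension is this: a universal system for all zero-entropy actions must simultaneously realize \emph{every} scaling entropy growth rate that is possible under the zero-entropy constraint (because the collection of such systems is very rich), yet a single compact topological system has a fixed, universally bounded scaling entropy growth. Since $G$ admits actions of almost complete growth, I can produce a zero-entropy system whose scaling entropy outgrows any prescribed $\phi(n) = o(\abs{W_n})$, and this eventually contradicts the boundedness forced by the fixed metric $d$ on the compact space $X$.

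First I would extract the relevant upper bound on $X$. Fix the equipment $\theta = \{W_n\}$. Since $d$ is a bounded admissible (indeed continuous, hence generating) metric on the compact space $X$, and $h_{top}(X, d, G) = 0$ by assumption, for \emph{every} invariant measure $\mu \in M_G(X)$ the variational principle gives $h(X, \mu, G) = 0$, so part~(2) of Theorem~\ref{thm_kolm} yields $\Phi(n, \eps) = o(\abs{W_n})$ for any $\Phi \in \mathcal{H}(X, \mu, G, \theta)$. More importantly, I would use the metric $d$ itself to produce a \emph{single} function $\psi(n)$, depending only on $(X, d, G)$ and $\theta$ and not on the measure, that dominates the scaling entropy of every invariant measure. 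The natural candidate is built from the topological $\eps$-entropy of the averaged metric $G^{\theta_n}_{av} d$: since $\mbbH_\eps(X, \mu, G^n_{av} d)$ is controlled by the number of balls needed to cover $X$ for the averaged metric (a purely topological, measure-independent quantity), one gets a uniform bound $\Phi(n, \eps) \le \psi(n)$ valid across all $\mu \in M_G(X)$, and zero topological entropy forces $\psi(n) = o(\abs{W_n})$.

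Next I would invoke the almost-complete-growth hypothesis with $\phi = \psi$. This produces a measure-preserving system $(Y, \nu, G)$ of zero entropy such that for every $\Phi \in \mathcal{H}(Y, \nu, G, \theta)$ and all small $\eps$ one has $\Phi(n, \eps) \not\lesssim \psi(n)$. By the universality hypothesis, since $(Y, \nu, G)$ has zero entropy, there is an invariant measure $\mu$ on $X$ with $(X, \mu, G) \cong (Y, \nu, G)$. Scaling entropy is a measure-theoretic isomorphism invariant of the equipped group (as recalled after Definition~\ref{def_equip}), so $\mathcal{H}(X, \mu, G, \theta) = \mathcal{H}(Y, \nu, G, \theta)$. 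Picking $\Phi_d(n, \eps) = \mbbH_\eps(X, \mu, G^n_{av} d)$ as a representative, this function lies in $\mathcal{H}(X, \mu, G, \theta)$ and satisfies $\Phi_d(n, \eps) \le \psi(n)$ from the uniform topological bound, hence $\Phi_d \lesssim \psi$, contradicting $\Phi_d \not\lesssim \psi$.

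The main obstacle I anticipate is justifying the \emph{uniform} (measure-independent) upper bound $\Phi_d(n,\eps) \le \psi(n)$ rigorously, and verifying that $\psi(n) = o(\abs{W_n})$ follows precisely from $h_{top} = 0$. The subtlety is that $\mbbH_\eps$ is defined measure-theoretically (via covers of prescribed measure), whereas the topological covering number ignores measure; I must check that for any $\mu$ the measure-theoretic $\eps$-entropy of $G^n_{av} d$ is bounded above by the logarithm of a topological $\eps/2$-covering number of $(X, G^n_{av} d)$, which is a covering argument on the compact space independent of $\mu$. Granting this comparison — which is standard since a genuine $\eps$-net for the averaged metric yields measurable cells of diameter below $\eps$ covering all of $X$ (taking $X_0 = \emptyset$) — the relation between the topological covering exponent and $h_{top}(X, G)$ via the definition of amenable topological entropy delivers $\psi(n) = o(\abs{W_n})$, and the contradiction closes the argument.
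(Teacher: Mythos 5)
Your proposal is correct and follows essentially the same route as the paper: argue by contradiction, use zero topological entropy to produce a single function $\phi(n)=o(\abs{W_n})$ dominating $\log\spn(d,W_n,\eps)$ for every $\eps$, feed that $\phi$ into the almost-complete-growth hypothesis to get a zero-entropy system whose scaling entropy class beats $\phi$, transfer that class to an invariant measure on $X$ via universality, and contradict it with the bound $\mbbH_\eps(X,\mu,G^n_{av}d)\le\mbbH_\eps(X,\mu,G^n_{max}d)\le\log\spn(d,W_n,\eps/2)\lesssim\phi(n)$. The covering-number comparison you flag as the main obstacle is handled in the paper exactly as you sketch it, via the maximized semimetric and the spanning numbers.
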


\begin{proof}
    We will prove the theorem by contradiction. Assume that $h_{top}(X, G) = 0$. Then
    \[
        \supl_{\eps > 0} \limsup\limits_{n\to +\infty} \frac{1}{\abs{W_n}}\log \spn(d, W_n, \eps) = 0. 
    \]
    Hence, for all $\eps > 0$
    \[
    \limsup\limits_{n\to +\infty} \frac{1}{\abs{W_n}}\log \spn(d, W_n, \eps) = 0.
    \]
    It is clear that there exists a function $\phi(n)$ such that $\frac{\phi(n)}{\abs{W_n}} \to 0$ and for any $\eps > 0$
    \[
    \phi(n) \gtrsim \log \spn(d, W_n, \eps).
    \]
    By the theorem assumption, $G$ admits {ergodic} actions of almost complete growth with respect to~$\theta$. Therefore, there exists {an ergodic} measure--preserving action $G \acts{\alpha} (Y,\nu)$ with zero entropy such that for any $\Phi \in \mathcal{H}(Y, \nu, G, \theta)$ and any sufficiently small $\eps$
    \begin{equation}\label{thm_univ1_eq_bound}
        \Phi(n,\eps) \not \lesssim \phi(n).
    \end{equation}
    Also by the theorem assumption, this action has a representation in the topological system~$(X, d, G)$. Hence, there exists an invariant measure~$\mu$ on~$X$ with $(X,\mu, G) \cong  (Y, \nu, G)$. In particular, scaling entropy classes $\mathcal{H}(Y, \nu, G, \theta)$ and  $\mathcal{H}(X, \mu, G, \theta)$ coincide. Note that the metric~$d$ on~$X$ is obviously admissible and summable. Therefore, 
    \[
    \mbbH_\eps\lr{X, \mu, G_{av}^n d} \in \mathcal{H}(X, \mu, G, \theta).
    \]
    However,
    \[
    \mbbH_{\eps}(X,\mu, G_{av}^nd) \le \mbbH_{\eps}(X,\mu, G_{max}^n d) \le \log \spn\lr{d, W_n, \frac{\eps}{2}} \lesssim \phi(n),
    \]
    and we have obtained a contradiction to~\eqref{thm_univ1_eq_bound}.
\end{proof}

In view of Theorem~\ref{thm_univ1}, 
it only remains to prove that non--periodic amenable groups admit {ergodic} actions of almost complete growth to achieve our goal, i.\,e., Theorem~\ref{thm_univ}. The rest of the work is devoted to proving this.    

\section{Coinduced actions and scaling entropy}

In order to construct the actions of almost complete growth, we implement the procedure of coinduction. 

\subsection{Coinduced actions}

Let us recall the construction of a coinduced action. Let $G$ be a countable amenable group and  $H \le G$ be a subgroup of $G$. Let $H \acts{\alpha} (X, \mu)$ be a measure--preserving action of $H$. Consider the decomposition of $G$ into a disjoint union of left cosets of~$H$: 
\[
G = \bigsqcup_{i = 0}^{\infty} g_i H,
\]
where $g_i$ are some representatives of the cosets. For further convenience, we choose $g_0 = e$. Consider a measure space  
\[
{(X^{\faktor{G}{H}},\mu^{\faktor{G}{H}}) = \prod_{i=0}^{\infty} (X_{i},\mu_i)},
\]
where each ${(X_{i},\mu_i)}$ is an isomorphic copy of ${(X,\mu)}$ corresponding to $g_i$. For $x \in X^{\faktor{G}{H}}$ and $i\geq 0$, we denote the $i$-th coordinate of $x$ by $x_i$, $x_i \in X_i$. For any $g \in G$ and any $i\geq 0$, there are unique $k(i,g) \in \mbbN \cup \{0\}$ and $h(i,g)\in H$ with $g g_i = g_{k(i,g)} h(i,g)$. Define a coinduced action $\Ind_H^G\alpha$ of the group $G$ on a measure space $X^{\faktor{G}{H}}$ in the following way. Let $x \in X^{\faktor{G}{H}}$, put
\[
g(x)_{i} = h(i, g^{-1})^{-1} \lr{x_{{k\lr{i, g^{-1}}}}}.
\]

Once and for all, we fix a system of cosets representatives $\{g_i\}$. 

\begin{lm}\label{lm_reduction}
    Let $H$ be a subgroup of a countable amenable group $G$  and $\{\tilde W_n\}_{n=1}^\infty$ be a F\o lner sequence in $G$. Then there exists another F\o lner sequence $\{W_n\}_{n=1}^\infty$ in  $G$ such that $|{W_n \triangle \tilde W_n}| = o(|{W_n}|)$ and
    \[
    W_n = \bigcup S_n^i g_i^{-1},
    \]
    where $S_n^i \subset H$ satisfy the following condition. For any  $h \in H$, $\eps > 0$, the inequality
    \[
    \abs{h S_n^i \triangle S_n^i} \le \eps {\abs{S_n^i}}
    \]
    holds for all sufficiently large $n$ and for any $i$.
\end{lm}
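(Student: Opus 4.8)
The plan is to exploit the fact that the coset decomposition $G = \bigsqcup_i g_i H$ becomes, after inversion, the decomposition of $G$ into \emph{right} cosets $G = \bigsqcup_i H g_i^{-1}$. Consequently, for the given set $\tilde W_n$ the required presentation $\tilde W_n = \bigcup_i \tilde S_n^i g_i^{-1}$ with $\tilde S_n^i = \{h \in H \colon h g_i^{-1} \in \tilde W_n\} \subset H$ is automatic and the union is disjoint; the genuine content of the lemma is the \emph{uniform} approximate left $H$-invariance of the slices. First I would record the compatibility between left multiplication by $h \in H$ and the slicing: since $h \tilde S_n^i \subset H$, left translation by $h$ carries the $i$-th slice of $\tilde W_n$ to the $i$-th slice of $h\tilde W_n$, and because the cosets $H g_i^{-1}$ are disjoint while right multiplication by $g_i^{-1}$ is a bijection, one obtains the exact identity
\[
\abs{h \tilde W_n \triangle \tilde W_n} = \suml_i \abs{h \tilde S_n^i \triangle \tilde S_n^i}.
\]

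Next I would turn this identity into a statement that \emph{most} slices are almost invariant. Fix $h \in H$ and $\eps > 0$ and call an index $i$ bad if $\abs{h \tilde S_n^i \triangle \tilde S_n^i} > \eps \abs{\tilde S_n^i}$. Summing the defining inequality over bad indices and comparing with the identity above gives
\[
\eps \suml_{i \text{ bad}} \abs{\tilde S_n^i} \le \abs{h \tilde W_n \triangle \tilde W_n},
\]
so the total mass of bad slices is at most $\eps^{-1}\abs{h \tilde W_n \triangle \tilde W_n} = o(\abs{\tilde W_n})$ by the F\o lner property of $\{\tilde W_n\}$. Thus for each single pair $(h,\eps)$ the fraction of $\tilde W_n$ occupied by non-invariant slices tends to zero. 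In particular this automatically disposes of the troublesome small slices (e.g.\ singletons, which are maximally bad for every $h \neq e$): their combined mass is forced to be negligible.

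The construction then proceeds by a diagonal argument over the countably many constraints. Enumerate $H = \{h_1, h_2, \ldots\}$ and consider the tasks $(h_j, 1/k)$ with $j,k \in \mbbN$; for each of them the bad-mass fraction $\beta_n(h_j,1/k) \to 0$ as $n \to \infty$ by the previous step. Since for every fixed $L$ the finite sum $\suml_{\max(j,k) \le L}\beta_n(h_j,1/k) \to 0$, I would pick an increasing sequence $n_1 < n_2 < \cdots$ and set a threshold $N(n) = L$ for $n_L \le n < n_{L+1}$, growing to infinity slowly enough that the combined bad-mass fraction of all tasks with $\max(j,k) \le N(n)$ is $o(1)$. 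Define $W_n$ by deleting from $\tilde W_n$ every slice that is bad for at least one task of index $\le N(n)$, i.e.\ keep $S_n^i = \tilde S_n^i$ for the surviving indices and put $S_n^i = \emptyset$ otherwise. Then $\abs{W_n \triangle \tilde W_n}$ equals the deleted mass, which is $o(\abs{\tilde W_n}) = o(\abs{W_n})$; this also keeps $\{W_n\}$ F\o lner. Finally, for any $h \in H$ and $\eps > 0$ choose $j,k$ with $h = h_j$ and $1/k \le \eps$; once $N(n) \ge \max(j,k)$, which holds for all large $n$, every surviving slice satisfies $\abs{h S_n^i \triangle S_n^i} \le (1/k)\abs{S_n^i} \le \eps\abs{S_n^i}$, while empty slices satisfy the bound trivially, yielding the claim for all $i$.

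The main obstacle is precisely this diagonalization: each individual constraint is cheap, its bad mass being $o(\abs{\tilde W_n})$, but there are infinitely many and one must let the number of simultaneously enforced constraints grow with $n$ while still keeping the total deleted mass $o(\abs{\tilde W_n})$. Calibrating the growth of $N(n)$ — slow enough for the mass control, yet fast enough that every fixed task is eventually enforced for all large $n$ — is the only delicate point; the rest is bookkeeping built on the slice decomposition and the exact F\o lner identity above.
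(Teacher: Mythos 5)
Your proposal is correct and follows essentially the same route as the paper: decompose $\tilde W_n$ into slices over the right cosets $Hg_i^{-1}$, use the exact identity $|h\tilde W_n\triangle\tilde W_n|=\sum_i|h\tilde S_n^i\triangle\tilde S_n^i|$ together with a Markov-type bound to show the bad slices have negligible total mass, delete them, and diagonalize over the countably many constraints. The only difference is bookkeeping: you enforce an explicit slowly growing family of tasks $(h_j,1/k)$ with threshold $N(n)$, whereas the paper uses the self-calibrating threshold $\eps(n,h)^{1/2}$ and a dominated-convergence argument over the enumeration of $H$ --- these are interchangeable.
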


\begin{proof}
    Consider some $h \in H$ and $n \in \mbbN$. Let us denote by $\eps(n, h)$ the following value  
    $$
    \eps(n, h) = \frac{|h \tilde W_n \triangle \tilde W_n|} {|\tilde W_n|}.
    $$
    Due to the F\o lner condition,  $\eps(n, h)$ goes to zero when $h$ is fixed. Moreover, each $\tilde W_n$ uniquely decomposes into a disjoint union $\bigcup_i S_n^i g_i^{-1}$, where $S_n^i$ are some finite subsets of $H$.  Let  $\tilde I(n, h)$ be a set of integers~$i$ such that $\abs{h S_n^i \triangle S_n^i} > \eps^{\frac{1}{2}}{(n,h)} {\abs{S_n^i}}$. Denote $E(n, h) = \cup_{i\in \tilde I(n, h)} S_n^i g_i^{-1}$. The left multiplication by $h$ preserves all right cosets~$Hg_i^{-1}$. Therefore, 
    \[
    \abs{E(n, h)} = \suml_{i\in \tilde I(n, h)}  \abs{S_n^i} < \eps{(n,h)}^{\frac{1}{2}}\abs{\tilde W_n}.
    \]
    Let $\tau\colon H \to \mbbN$ be an arbitrary enumeration of all elements of $H$. Define
    \[
    W_n = \tilde W_n \setminus \bigcup_{h \colon \eps(n,h) < 2^{-\tau(h)}} E(n,h).
    \]
   Clearly, sequence  $\{W_n\}$ is the desired one. Indeed, we have
   \[
   \abs{\tilde W_n \triangle W_n}  \le 
   \suml_{\eps(n,h) < 2^{-\tau(h)}} \abs{E(n, h)} <
   \suml_{\eps(n,h) < 2^{-\tau(h)}} \eps(n,h)^{\frac{1}{2}}\abs{\tilde W_n} = o(|\tilde W_n|).
   \]
   The last equality holds due to the Lebesgue's dominated convergence theorem.
\end{proof}

\begin{defn}
	A subset $S$ of integer numbers is said to be \emph{$\eps$--invariant} for some positive $\eps$ if it satisfies $\abs{(S+1)\triangle S} < \eps \abs{S}$.
\end{defn}

\begin{rem}
    Evidently, if $H = \mbbZ$ in Lemma~\ref{lm_reduction}, then there exists a subsequence $\{n_j\}$ such that all $S_{n_j}^i$ are $\frac{1}{j}$--invariant. 
\end{rem}

\subsection{Scaling entropy of a coinduced action}

In this section, we estimate the scaling entropy of a conduced action. We reduce the question about the existence of almost complete actions for non--periodic amenable groups to the case of the group $\mathbb{Z}$ which is considered in Section~\ref{S6}. Also, we use an important technical Lemma~\ref{lm_estimate} whose proof is postponed to Section~\ref{sec_proof_est}.  

\begin{thm}\label{thm_amen}
Let $\lambda = \{F_n\}_{n=1}^\infty$ be a F\o lner sequence of a countable non--periodic amenable group~$G$. Then the group~$G$ admits {ergodic} actions of almost complete growth with respect to~$\lambda$. 
\end{thm}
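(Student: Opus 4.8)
The plan is to obtain the required $G$--actions by \emph{coinducing} from a copy of $\mathbb{Z}$ inside $G$, thereby reducing Theorem~\ref{thm_amen} to the one--dimensional construction carried out in Section~\ref{S6}. Since $G$ is non--periodic it contains an element of infinite order; let $H\cong\mathbb{Z}$ be the subgroup it generates. Given a non--negative $\phi(n)=o(\abs{F_n})$, I would first apply Lemma~\ref{lm_reduction} to $H$ and $\{F_n\}$ to replace $\{F_n\}$ by an equivalent Følner sequence $\{W_n\}$, $\abs{W_n\triangle F_n}=o(\abs{W_n})$, with $W_n=\bigsqcup_i S_n^i g_i^{-1}$ and $S_n^i\subset H$; by the Remark after the definition of $\eps$--invariance we may pass to a subsequence $\{n_j\}$ along which every $S_{n_j}^i$ is $\tfrac1j$--invariant. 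Because the two equipments are $o(\cdot)$--close we have $\mathcal{H}(\cdot,\cdot,G,\{F_n\})=\mathcal{H}(\cdot,\cdot,G,\{W_n\})$ and $o(\abs{F_n})=o(\abs{W_n})$, so it suffices to control the scaling entropy with respect to $\{W_n\}$. For the final action I would take $\Ind_H^G\alpha$ on $X^{\faktor{G}{H}}$ for a suitable zero entropy $\mathbb{Z}$--action $H\acts{\alpha}(X,\mu)$ with $d\le 1$ to be chosen below.

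The decisive observation is that the averaged semimetric of the coinduced action splits into independent coordinates. Using $g(x)_i=h(i,g^{-1})^{-1}(x_{k(i,g^{-1})})$ and $g_0=e$, one checks that for the semimetric $\rho_0(x,y)=d(x_0,y_0)$ an element $g=s\,g_i^{-1}$ with $s\in S_n^i$ acts by $\rho_0(gx,gy)=d(s\,x_i,s\,y_i)$, whence
\[
G^{W_n}_{av}\rho_0(x,y)=\frac{1}{\abs{W_n}}\suml_i\abs{S_n^i}\,\nu_n^i(x_i,y_i),\qquad \nu_n^i(x_i,y_i)=\frac{1}{\abs{S_n^i}}\suml_{s\in S_n^i}d(s\,x_i,s\,y_i).
\]
Thus $G^{W_n}_{av}\rho_0$ is a weighted average, over the mutually independent coordinates $i$, of the $S_n^i$--averaged semimetrics of the single $\mathbb{Z}$--action $\alpha$ — precisely an averaging of independent metrics. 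Moreover a nonempty $\tfrac1j$--invariant subset of $\mathbb{Z}$ has more than $2j$ elements, so along $\{n_j\}$ every nonempty piece satisfies $\abs{S_{n_j}^i}>2j\to\infty$.

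For the lower bound I would feed the displayed decomposition into Lemma~\ref{lm_estimate}, which bounds the $\eps$--entropy of such a weighted average from below by $\gtrsim\sum_i\mathbb{H}_\delta(X,\mu,\nu_n^i)$ for a fixed $\delta=\delta(\eps)$; the point of that lemma is exactly that a combinatorial argument avoids the dilution one would incur by resolving each small--weight coordinate separately. Since $\tfrac1j$--invariance makes each $S_{n_j}^i$ Følner--like, $\mathbb{H}_\delta(X,\mu,\nu_{n_j}^i)\asymp\psi(\abs{S_{n_j}^i})$, where $\psi$ is the scaling function of the chosen stable $\alpha$; taking $\psi$ increasing and subadditive (possible for $\mathbb{Z}$ by \cite{PZ}) gives $\sum_i\psi(\abs{S_{n_j}^i})\ge\psi\lr{\sum_i\abs{S_{n_j}^i}}=\psi(\abs{W_{n_j}})$. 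As $\rho_0$ is dominated by a generating summable semimetric (e.g. $\suml_i2^{-i}d(x_i,y_i)$), the representative $\Phi\in\mathcal{H}$ satisfies $\Phi(n_j,\eps)\ge\mathbb{H}_\eps(G^{W_{n_j}}_{av}\rho_0)\gtrsim\psi(\abs{W_{n_j}})$. It then remains to invoke Section~\ref{S6} to produce a zero entropy $\mathbb{Z}$--action whose increasing subadditive scaling function $\psi$ is $o(\mathrm{id})$ yet satisfies $\psi(\abs{W_{n_j}})\not\lesssim\phi(n_j)$; because $\phi(n)/\abs{W_n}\to0$ while $\abs{W_n}\to\infty$, a routine interpolation along a sparse sub--subsequence yields such a $\psi$, and this gives $\Phi(n,\eps)\not\lesssim\phi(n)$.

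Finally, the complementary estimate $\Phi(n,\eps)=o(\abs{W_n})$ — equivalently, by Theorem~\ref{thm_kolm}(2), $h(X^{\faktor{G}{H}},\mu^{\faktor{G}{H}},G)=0$ — follows since coinduction of a zero entropy action is again of zero entropy; alternatively it can be read off the same decomposition, as $\psi=o(\mathrm{id})$ and $\abs{S_{n_j}^i}\to\infty$ force $\sum_i\psi(\abs{S_n^i})=o(\abs{W_n})$ and an upper bound of the type of Lemma~\ref{lm_upperbound} then caps $\Phi$. Combining the two estimates shows that $\Ind_H^G\alpha$ has almost complete growth with respect to $\{W_n\}$, hence with respect to $\{F_n\}$, proving Theorem~\ref{thm_amen}. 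I expect the main obstacle to be Lemma~\ref{lm_estimate}: establishing a genuine lower bound for the $\eps$--entropy of a \emph{weighted} average of independent metrics is delicate, since a coordinate carrying weight $\abs{S_n^i}/\abs{W_n}$ is only directly visible at the coarse scale $\eps\abs{W_n}/\abs{S_n^i}$, and one must show that the independent contributions cannot cancel out under averaging — which is why its proof is deferred to Section~\ref{sec_proof_est}.
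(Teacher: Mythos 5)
Your overall architecture coincides with the paper's: pass to $W_n=\bigcup_i S_n^i g_i^{-1}$ via Lemma~\ref{lm_reduction}, coinduce from $H=\left<h\right>\cong\mbbZ$, observe that $G_{av}^{W_n}\rho_0$ is a weighted average, over independent coordinates, of the $S_n^i$--averaged semimetrics of a single $\mbbZ$--action, and feed this into Lemma~\ref{lm_estimate}. The gap is in what you propose to feed in. The assertion $\mbbH_\delta\lr{X,\mu,T_{av}^{S_{n_j}^i}\rho}\asymp\psi(\abs{S_{n_j}^i})$ for a stable $\mbbZ$--action and \emph{arbitrary} $\frac1j$--invariant sets $S_{n_j}^i$ is not available: stability and the computations of \cite{Z2} are relative to the standard equipment by intervals, and nothing in the cited theory transfers a scaling sequence to averages over arbitrary almost--invariant subsets of $\mbbZ$. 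Producing such a bound --- only from below, only along an adaptively chosen subsequence, and only for a measure $\mu^\sigma$ built in response to the given family $\{S_n^i\}$ --- is precisely the content of Lemma~\ref{lm_example} and of all of Section~\ref{S6}; it cannot be cited away.

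Independently of that, your quantitative scheme is incompatible with Lemma~\ref{lm_estimate} as stated. The weights are forced to be $s_i=\abs{S_{n_j}^i}$ by the decomposition~\eqref{eq98342}, and the lemma requires $\phi^{-1}s_i<\mbbH_{4\eps}(X_i,\mu_i,\rho_i)<s_i$, returning only $\frac{\eps^3}{\phi}\sum_i\mbbH_{4\eps}(X_i,\mu_i,\rho_i)-k-1$. If the per--coordinate entropies are $\asymp\psi(\abs{S^i})$ with $\psi$ genuinely sublinear, you must take $\phi\ge\max_i\abs{S^i}/\psi(\abs{S^i})$, which is unbounded, and the $1/\phi$ loss destroys the estimate: with all $\abs{S^i}=s$ and $\psi(t)=\sqrt t$ the lemma yields about $\eps^3k-k-1\le 0$, nowhere near $\psi(\abs{W_n})=\sqrt{ks}$. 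So the chain $\Phi\gtrsim\sum_i\psi(\abs{S^i})\ge\psi(\abs{W_n})$ does not follow from the tools at hand. The paper's resolution is to make each coordinate \emph{almost fully} entropic: Lemma~\ref{lm_example} produces a zero--entropy $T$ with $\mbbH_{4\eps}\lr{X,\mu,T_{av}^{S_{n_j}^i}\rho}>\abs{S_{n_j}^i}/\phi(n_j)$ for a prescribed, arbitrarily slowly growing $\phi$, so that Lemma~\ref{lm_estimate} applies with that same $\phi$ and gives $\gtrsim\eps^3\abs{W_{n_j}}/\phi(n_j)^2-2k_{n_j}$; choosing $\phi^2=o(\psi)$ and $k_n=o(\phi(n)^{-2}\abs{W_n})$ (your sketch also omits absorbing the additive $-k-1$ term) then beats any prescribed $o(\abs{F_n})$ target. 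Your reduction steps and the zero--entropy paragraph are fine.
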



\begin{proof}
    Let $h \in G$ be an element of infinite order in $G$ and  $H = \left< h\right>$ be the subgroup generated by~$h$. Let $\{g_i\}$ be a system of representatives of left cosets with $g_0 = e$. Lemma~\ref{lm_reduction} states that there is a sequence $\theta = \{W_n\}$ of finite subsets of $G$ with
    \begin{equation}\label{eq140801}
        \abs{F_n \triangle W_n} = o (\abs{F_n}),
    \end{equation}
    and $W_n = \bigcup S_n^i g_i^{-1}$, where $S_n^i \subset H$ are such that for any $\eps > 0$ and for any sufficiently large~$n$ 
    \[
    \abs{h S_n^i \triangle S_n^i} \le \eps {\abs{S_n^i}}.
    \]
    Relation~\eqref{eq140801} implies that any measure--preserving action $\alpha$ of $G$ satisfies $\mathcal{H}(\alpha, \lambda) = \mathcal{H}(\alpha, \theta)$. Thus, it suffices to prove that the group $G$ equipped with $\theta$ instead of $\lambda$ admits {ergodic} actions of almost complete growth. 
    
     Let $\phi(n)$ be an increasing positive function which goes to infinity. We will apply the following lemma proved in Section~\ref{S6}.
    
    \begin{lm}\label{lm_example}
	    Suppose that a sequence $\{S_n^i\}_{i = 1}^{k_n}$ of finite families of finite subsets of $\mbbZ$ is such that every $S_n^i$ is $\frac{1}{n}$--invariant. Let $\phi(n)$ be a sequence of positive numbers with $\lim_{n \to \infty} \phi(n) = \infty$. Then there exist an ergodic automorphism $T$ of a Lebesgue space $(X,\mu)$ and a subsequence  $\{n_j\}$ such that for any generating admissible summable semimetric  $\rho$ and sufficiently small $\eps > 0$ the following relation holds:
    	\begin{equation}\label{lm_exampe_st}
    	\frac{\abs{S_{n_j}^i}}{\phi(n_j)} \lnsim \mbbH_\eps\lr{X, \mu, T_{av}^{S_{n_j}^i} \rho} \lnsim \abs{S_{n_j}^i},\qquad i = 1, \ldots, k_{n_j}.
    	\end{equation}     
    \end{lm}
    
    The symbol $\lnsim$ here means that the ratio of the left hand side to the right hand side tends to zero when~$j$ goes to infinity and this convergence is uniform with respect to~$i$. Note that there are only finitely many $S_{n_j}^i$ for each $j$. {We apply Lemma~\ref{lm_example} for the sequence of non-empty sets $S_n^i$ given by Lemma~\ref{lm_reduction}. Let~$I_n$ be the set of those indices $i$ for which $S_n^i$ is not empty and $k_n = |I_n|$. } Relation~\eqref{lm_exampe_st} implies, in particular, that  $T$ has zero entropy and for any $\eps$ small enough for sufficiently large~$j$ 
    \begin{equation}\label{thm_amen_eq_subgrowth}
    \frac{\abs{S_{n_j}^i}}{\phi(n_j)} < \mbbH_{4\eps}\lr{X, \mu, T_{av}^{S_{n_j}^i} \rho} < \abs{S_{n_j}^i},\ 
    {i \in I_{n_j}}.
    \end{equation} 
    Consider  an action $\alpha = \Ind_H^G T$ coinduced from $H$ to the whole group $G$. Let $\tilde{\rho} \le 1$ be an admissible metric on $(X, \mu)$. Define a semimetric $\rho$ on $(X,\mu)^{\faktor{G}{H}}$ in the following way:
    \[
    \rho(x, y) = \tilde{\rho}(x_0, y_0), \qquad x,y \in X^{\faktor{G}{H}}.
    \]
    Although $G$ acts transitively on $\faktor{G}{H}$, the semimetric $\rho$ may not be generating.\footnote{For example, if all $S_n^0$ are empty.} However, it does not really matter because we are only looking for lower bounds for the scaling entropy. Since by the choice of the representatives $g_0 = e$, elements of $H$ act on the first component  of $(X,\mu)^{\faktor{G}{H}}$ independently of other coordinates. Thus, for all $x,y \in X^{\faktor{G}{H}}$ 
    \[
    \rho (hx, hy)= \tilde{\rho}(hx_{0}, hy_{0}). 
    \]
    Then
    \[
    H_{av}^{S^i_{n_j}}\rho(x,y) = H_{av}^{S^i_{n_j}}\tilde{\rho}(x_{0},y_{0}).
    \]
    For each coset representative $g_i$, define a semimetric $\rho_i$ on $(X,\mu)^{\faktor{G}{H}}$ by 
    \[
    \rho_i = g_i H_{av}^{S^i_{n_j}}\rho.
    \]
    Each $\rho_i$ depends only on the $i$-th component:  
    \[
    \rho_i(x,y) = (H_{av}^{S^i_{n_j}}\rho)(g_i^{-1}x,g_i^{-1} y) = (H_{av}^{S^i_{n_j}}\tilde{\rho})(x_i, y_i), \qquad x,y \in (X,\mu)^{\faktor{G}{H}}.
    \]
    Hence, we can consider $\rho_i$ as a semimetric on $X_i$. The averaging of  $\rho$ with respect to $W_{n_j}$ can be expressed in terms of $\rho_i$ as follows:
    {
    \begin{multline}\label{eq98342}
    	 G_{av}^{W_{n_j}}\rho =  \frac{1}{\abs{W_{n_j}}}  \suml_{i \in I_{n_j}} \suml_{s \in S^i_{n_j}} g_i s^{-1}\rho= 
    	\frac{1}{\abs{W_{n_j}}}\suml_{i \in I_{n_j}} g_i \suml_{s \in S^i_{n_j}} s^{-1} \rho = \\
    	\frac{1}{\abs{W_{n_j}}} \suml_{i \in I_{n_j}} \abs{S^i_{n_j}} g_i H_{av}^{S^i_{n_j}}{\rho} = 
    	\frac{1}{\suml_{i \in I_{n_j}} \abs{S^i_{n_j}}} \suml_{i \in I_{n_j}} \abs{S^i_{n_j}} \rho_i.
    \end{multline}}
  	
  	The next step is to estimate the epsilon--entropy of the semimetric given by~\eqref{eq98342}. The proof of the following lemma will be given in Section~\ref{sec_proof_est}. 
  	\begin{lm}\label{lm_estimate}
		Suppose  $\eps > 0$ and $\phi > 1$ are fixed. 
		Consider a finite family of admissible semimetric triples $(X_i, \mu_i, \rho_i)$, $i = 1, \ldots, k$. Assume that  $\{s_i\}_{i=1}^k$ are such that $\phi^{-1} s_i < \mbbH_{4\eps}(X_i, \mu_i, \rho_i) < s_i$. Define a semimetric $\rho$ on $\prod_{i=1}^k (X_i, \mu_i) = (X, \mu)$ as a weighted averaging:
    		\[
    		\rho(x,y) = \frac{1}{\suml_{i=1}^k s_i} \suml_{i=1}^{k} s_i \rho_i(x_i, y_i),
    		\]
    	where $x = (x_1,\dots, x_k)$, $y = (y_1,\dots, y_k)$. Then    
    		\[
    		\mbbH_{\eps^4}(X,\mu, \rho) \ge \frac{1}{\phi}\eps^3  \suml_{i=1}^k \mbbH_{4\eps}(X_i, \mu_i, \rho_i) - k - 1. 
    		\]  
	\end{lm}
  	
  	Since for large~$j$ inequalities~\eqref{thm_amen_eq_subgrowth} hold, we can apply Lemma~\ref{lm_estimate} to semimetrics~$\rho_i$, weights $s_i = \abs{S^i_{n_j}}$, and $\phi = \phi(n_j)$. We obtain the following estimate:
  	\begin{equation}
  		\mbbH_{\eps^4}(X^{\faktor{G}{H}},\mu^{\faktor{G}{H}}, G_{av}^{W_{n_j}}\rho) \ge \frac{1}{\phi(n_j)}\eps^3  \suml_{{i \in I_{n_j}}} \mbbH_{4\eps}(X, \mu, H_{av}^{S^i_{n_j}}{\tilde\rho}) - k_{n_j} - 1 \ge
  		 \frac{1}{\phi(n_j)^2}\eps^3 \abs{W_{n_j}} - 2k_{n_j}.
  	\end{equation}
  	Clearly, for any sequence $\psi(n)$, which goes to infinity, there is an increasing  $\phi(n)$ also going to infinity with  $\phi^2(n) = o(\psi(n))$. Note that $k_{n} = o(\abs{W_{n}})$, so we can choose such $\phi$ with $k_n = o({\phi(n)^{-2}}\abs{W_{n}})$. Therefore, the action $\alpha$ constructed by such slow growing sequence $\phi$ satisfies the following condition. For any $\Phi \in \mathcal{H}(\alpha, \theta)$ and sufficiently small~$\eps > 0$,
  	\[
  	 \Phi(n,\eps) \not \lesssim \frac{\abs{W_n}}{\psi(n)}.
  	\]
  	
  	Let us show that $\alpha$ has zero entropy. Indeed, it can be shown directly through a similar technique that every action coinduced from an action with zero entropy has zero entropy as well. The right inequality in~\eqref{lm_exampe_st} combined with Lemma~\ref{lm_upperbound}  shows that $\Phi_\rho(n,\eps) = o({|{W_n}|})$. Although $\rho$ may not be generating with respect to~$\theta$, we can choose $\theta^\p = \{W_n^\p\}$ with $|W_n^\p\triangle W_n| = o(|W_n|)$ such that $\rho$ is generating with respect to $\theta^\p$. Hence,   $\Phi_\rho(n,\eps) \in \mathcal{H}(\alpha, \theta^\p) = \mathcal{H}(\alpha, \theta)$ and the  amenable measure entropy of $\alpha$ is zero due to Theorem~\ref{thm_kolm}.
  	
  	{
  	It only remains to prove that the constructed actions are ergodic. It is true if the index of $H$ is infinite. In this case, the ergodicity follows from the similar argument as in the case of Bernoulli shift. Suppose, that there is a non--trivial invariant subset~$E\subset \prod_{i=1}^\infty X_i$. This subset can be approximated by a cylinder set $C$ with $\mu(E \triangle C) < \eps$, where $\eps$ is arbitrarily small. Evidently, for any~$g \in G$ the set $g^{-1} C$ is cylinder as well and $\mu(g^{-1} C \triangle C) < 2\eps$. However, for any cylinder~$C$ we can find an element~$g \in G$ such that the basements of $C$ and $g^{-1}C$ do not intersect and, therefore,  $\mu(g^{-1} C \cap C) = \mu(C)^2 < \mu(C) - 2\eps$.
  	
  	In the case of finite index, the coinduced action itself may not be ergodic. However, we can consider its ergodic component~$\nu$ whose projections coincide with the initial measures~$\mu_i$. Since $\alpha$ has zero entropy, almost all such components have zero entropy as well. To estimate the scaling entropy of this system, we will use the following simple argument instead of Lemma~\ref{lm_estimate}. {Let $L$ be the index of $H$}. Then there exists some $S^{i_0}_{n_j}$ with $|S^{i_0}_{n_j}| \ge \frac{1}{L}|W_{n_j}|$. Therefore,
  	\begin{multline}
  	    \mbbH_{\eps}(X^{\faktor{G}{H}},\nu, G_{av}^{W_{n_j}}\rho) \ge \mbbH_{\eps}(X^{\faktor{G}{H}},\nu, L^{-1}\rho_{i_0}) \ge \\ \mbbH_{L\eps}(X^{\faktor{G}{H}},\nu, \rho_{i_0}) =
  	    \mbbH_{L\eps}(X_{i_0},\mu_{i_0}, \rho_{i_0}) > \frac{1}{\phi(n_j)}\abs{S^{i_0}_{n_j}} \ge \frac{1}{L\phi(n_j)} \abs{W_{n_j}}.
  	\end{multline}
  	Thus, in both cases the desired actions are constructed.} 
\end{proof}

\section{Adic action on the graph of ordered pairs}
\label{S6}

	\newcommand{\point}{\mathfrak o}
	\newcommand{\word}{\mathfrak b}
	\newcommand{\edgemark}{\mathfrak c}
\subsection{Graph of ordered pairs}		
	    In order to construct {ergodic} actions of almost complete growth, we use the notion of \emph{the adic transformation} (Vershik's automorphism) on \emph{the graph of ordered pairs}. This graph was studied in detail in~\cite{VZ} and~\cite{Z2}.
		
		Consider an infinite graded graph $\Gamma = (V, E)$. The set of vertices $V$ is a disjoint union of the levels $V_n = \{0,1\}^{2^n}$, $n \geq 0$. The set of edges is defined together with the coloring $\edgemark \colon E \to \{0,1\}$ in the following way. Let $v_n \in V_n$ and $v_{n+1}\in V_{n+1}$. The edge $e = (v_n, v_{n+1})$ belongs to $E$ if and only~if $v_n$ is a prefix or a suffix of $v_{n+1}$. We mark this edge (define $\edgemark(e)$)  with $0$ or $1$ respectively. If~$v_n$ simultaneously forms both a prefix and a suffix of $v_{n+1}$, then we draw two distinct edges between $v_n$ and $v_{n+1}$ also marked with $0$ and $1$ respectively. The vertices $v_n$ and $v_{n+1}$ are called the initial and terminal points of $e$. We denote them by $s(e)$ and $r(e)$ respectively. 
		
		A path in $\Gamma$ is a sequence of edges $\{e_i\}$  such that $s(e_{i+1}) = r(e_i)$ and $s(e_i) \in V_i$. On the set $X$ of all infinite paths the cylinder topology is imposed in a natural way. A Borel measure on $X$ is called \emph{central} if all possible  beginnings of a path have equal probabilities while the tail is fixed. It means that any two cylinder sets whose corresponding finite paths have the same terminal vertex have the same measure.  
		
		Define \emph{the adic transformation~$T$} on the path space~$X$. Let $x = \{e_i\}_{i=0}^\infty$ be an infinite path. Find the minimal~$n$ with~$\edgemark(e_n) = 0$. Transformation $T$ maps $x$ to another path $T(x) = \{u_i\}$ defined in the following way. Let  $u_i = e_i$ for  $i \ge n+1$, $\edgemark(u_n) = 1$, and $\edgemark(u_i) = 0$ for all $i < n$ (see Fig.~\ref{fig:adic}). If~$\mu$ is a central measure, then this transformation is defined on a subset of full measure and forms an automorphism of the measure space  $(X, \mu)$.   
		
		Now let $\sigma = \{\sigma_n\}$ be a given sequence of zeroes and ones. Let us construct a special central measure $\mu^\sigma$ on $X$. Note that any Borel measure $\mu$ on the path space is uniquely determined by a coherent system $\{\mu_n\}$, where each $\mu_n$ is a measure on a space $X_n$ of finite paths of length $n$. In terms of $\mu_n$, the centrality of $\mu$ means that for any $n$ the measure $\mu_n$ only depends on the terminal vertex of a path. Define then a measure $\nu_n$ on the $n$-th level $V_n$ as follows:
		\[
		\nu_n(v) = \suml_{\substack{x \in X_n,\\ r(x) = v}} \mu_n(x).
		\]     
		The coherent system of measures $\{\nu_n\}$ uniquely determines the central measure  $\mu$.
		
		Let us construct a sequence of finite subsets $V_n^\sigma \subset V_n$. Let $V_0^\sigma = V_0$. For $n \ge 1$, put $V_n^\sigma = \{ab \colon a,b \in V_{n-1}^\sigma\}$ if $\sigma_n = 1$ and $V_n^\sigma = \{aa \colon a \in V_{n-1}^\sigma\}$ otherwise. Let $\nu_n^\sigma$ be the uniform measure on the finite set $V_n^\sigma \subset V_n$. It is easy to see that $\{\nu_n^\sigma\}$ forms a coherent system. Let $\mu^\sigma$ be  the unique central measure on $X$ with the given coherent system~$\{\nu_n^\sigma\}$ (see~\cite{Z2} for details). 
		
		\begin{figure}[h]
		    \centering
		    \includegraphics[scale = 0.5]{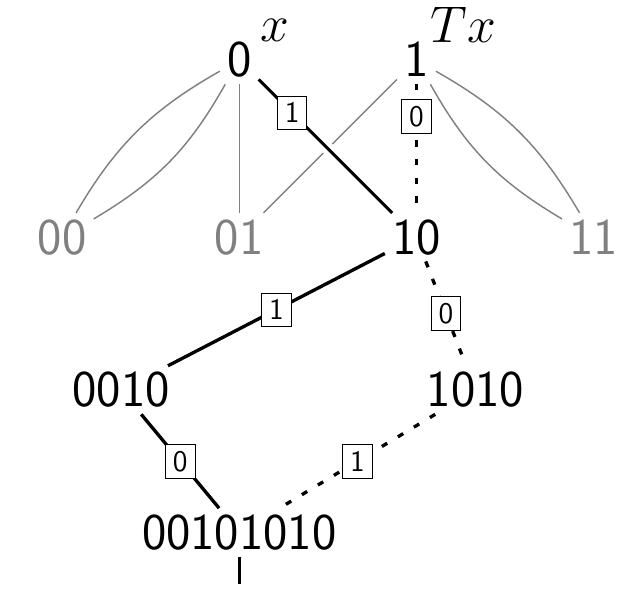}
		    \caption{The adic transformation. 
		    }
		    \label{fig:adic}
		\end{figure}
		
		In~\cite{Z2}, it is proved that the system  $(X, \mu^\sigma, T)$ is stable \emph{with respect to the standard equipment} of the group~$\mathbb{Z}$. Moreover, it is shown that the sequence $h_n = 2^{s^\sigma(\log n)}$, where $s^{\sigma}(t) = \sum_{i < t} \sigma_i$, is a scaling entropy sequence of that system. In addition, for every $\sigma$ with an infinite number of ones, the transformation $T$ is ergodic. The Kolmogorov--Sinai entropy of $T$ is positive if and only if there are only finitely many zeroes in $\sigma$. Lemma~\ref{lm_example} of this work deals with a more complicated system of sets over which we take an averaging. However, we can restrict ourselves to establishing only lower bounds for epsilon--entropy.   
		
		Let $x = \{e_i\} \in X$ be an infinite path. We denote by~$\word_n(x)$ the vertex of $n$-th level which lies on~$x$. By $\point_n(x)$ we denote the value of  $\sum_{i = 0}^{n-1} \edgemark(e_i)2^i$. It easy to see that if $\point_n(x) < 2^n-1$ then $\word_n(Tx) = \word_n(x)$ and $\point_n(Tx) = \point_n(x) + 1$.
	
	    Thus, we have described the construction of the graph of ordered pairs and the adic transformation on its path space. Now we will use this construction to prove Lemma~\ref{lm_example}.

\subsection{Proof of Lemma~\ref{lm_example}}
	
	In order to prove Lemma~\ref{lm_example}, we construct special measures $\mu^\sigma$ on the path space $X$. The adic transformation~$T$ on the space $(X,\mu^\sigma)$ produces the desired automorphism. The idea is to choose an appropriate $\sigma$ in which zeroes occur rarely. We will determine the positions of these zeroes inductively one by one. Note that for any $\sigma$ with an infinite number of zeroes, the adic transformation has entropy zero.  Therefore, the right hand side of inequality~\eqref{lm_exampe_st} holds automatically. Indeed, it follows from Theorem~\ref{thm_kolm} and the fact that the sequence formed by all  $S_n^i$ satisfies the F\o lner condition.   
	
	It is sufficient to prove the left part of inequality~\eqref{lm_exampe_st} for an arbitrary admissible summable semimetric (which may not be generating). Indeed, the simple argument, which we have already used, shows that if it holds for some semimetric, then it holds for any generating one as well. Now let $\rho$ be a cut semimetric corresponding to a partition that separates paths according to their first vertices. It is also enough to prove only non--strict inequality (with the sign $\lesssim$ instead of $\lnsim$). Indeed, we can just change the sequence $\phi(n)$ to $\phi^{\frac{1}{2}}(n)$, for example.      
	
	Without loss of generality, we can assume that all the sets $S_n^i$ consist of positive numbers. Let us fix some positive $\eps < \frac{1}{10}$. Suppose that we have already chosen $p$ numbers  $q_1, \ldots, q_p$ and another $p$ numbers $n_1,\ldots, n_p$ such that 
	\[
	\frac{\abs{S_{n_j}^i}}{\phi(n_j)} < \mbbH_\eps\lr{X, \mu, T_{av}^{S_{n_j}^i} \rho},\qquad i = 1, \ldots, k_{n_j},
	\]
	holds for $j = 1, \ldots, p$ for any $\sigma$ whose first zeroes are exactly $q_1, \ldots, q_p$. Initially, we take~$p = 0$.
	
	For $l > n_p$, we can assume that all the sets $\{S_l^i\}_{i=1}^{k_l}$ lie in the interval $\{0, \ldots, 2^{n(l)} - 1\}$. Let $N(l)$ be a sufficiently large constant. For example, we can put  $N = n+2$. 
	For a binary word $v \in V_N$, we will denote its $k$-th bit by $v_k$. Note that for $x, y \in X$, the equality $\rho(x, y) = 0$ holds if and only if  $\word_N(x)_{\point_N(x)} = \word_N(y)_{\point_N(y)}$. Thus,
	\[
	T_{av}^{S_{l}^i} \rho (x, y) = \frac{1}{\abs{S_{l}^i}} \suml_{j \in S_{l}^i} \rho(T^j x, T^j y) = 
	\frac{1}{\abs{S_{l}^i}} \abs{\{j \in S_{l}^i \colon  \word_N(T^jx)_{\point_N(T^jx)} \not = \word_N(T^jy)_{\point_N(T^jy)}\}}.
	\] 
	
	Consider the set $A^{S_{l}^i} = \{0,1\}^{S_{l}^i}$ and the measure $\mu^\sigma_{S_{l}^i}$ on it defined as follows:
	\[
	\mu^\sigma_{S_{l}^i}(w) = \mu^\sigma (x \in X\colon \word_N(T^jx)_{\point_N(T^jx)} = w_j, \ j \in  S_{l}^i).
	\]
	The mapping  
	$$\Phi\colon x \mapsto \lr{\word_N(T^jx)_{\point_N(T^jx)} }_{j \in S_{l}^i} $$ 
	produces an isomorphism of semimetric triples $(X, \mu^\sigma, T_{av}^{S_{l}^i} \rho)$ and $(A^{S_{l}^i}, \mu^\sigma_{S_{l}^i}, \rho^H)$, where~$\rho^H$ is a  Hamming distance on~$A^{S_{l}^i}$. The next step is to construct an appropriate uniform approximation of~$\mu^\sigma_{S_{l}^i}$. Note that for $\point_N(x) < 2^N - 2^n$ we have
	\[
	\Phi(x) = \lr{\word_N(x)_{\point_N(x)+j}}_{j \in S_{l}^i}.
	\]
	Due to centrality of $\mu^\sigma$, we have for any $N > n$
	\[
	\mu^\sigma(x \in X \colon \point_N(x) \ge 2^N - 2^n) = 2^{n-N}.
	\]
	Therefore, if $l$ is fixed and $N$ is large, then the measure $\mu^\sigma_{S_{l}^i}$ can be approximated pointwise simultaneously for all $i$ by the following measure 
	\begin{multline}\label{lm_example_eq1}
	\mu^\sigma_{S_{l}^i, N}(w) = \frac{1}{1 - 2^{n - N}} \mu^\sigma(x \colon \Phi(x) = w, \ \point_N(x) < 2^N - 2^n) =\\
	\frac{1}{2^N - 2^n} \suml_{k = 0}^{2^N - 2^n -1} \nu_N^\sigma (v \in V_N\colon v_{k + j} = w_j, \ j \in  S_{l}^i).
	\end{multline}
	The last equality follows from centrality of $\mu^\sigma$. Note that for $N > n+1$, we have the inequality $\mu^\sigma_{S_{l}^i, N} < 2\mu^\sigma_{S_{l}^i}$ everywhere on  $A^{S_{l}^i}$. Therefore,
	\begin{equation}\label{lm_example_eq_appr}
	\mbbH_\eps(A^{S_{l}^i}, \mu^\sigma_{S_{l}^i}, \rho^H) \ge \mbbH_{2\eps}(A^{S_{l}^i}, \mu^\sigma_{S_{l}^i, N}, \rho^H).
	\end{equation}
	Suppose that $q_{p+1}$, which is not defined yet,  is greater than $N$. Then any two summands in the last sum in~\eqref{lm_example_eq1} whose difference in indices is a multiple of $2^{q_p}$ coincide. It follows from the construction of $\nu_N^\sigma$. Therefore,
	\[
	\mu^\sigma_{S_{l}^i, N}(w) = \frac{1}{2^{q_p}} \suml_{k = 0}^{2^{q_p} -1} \nu_N^\sigma (v \in V_N\colon v_{k + j} = w_j, \ j \in  S_{l}^i).
	\]
	However, the epsilon--entropy of a semimetric with respect to a convex combination of measures can be estimated from below by the epsilon--entropy of this semimetric with respect to one of these measures. Therefore, it suffices to provide a lower bound for the $2\eps$--entropy of  $(A^{S_{l}^i}, \mu^\sigma_{S_{l}^i, N, k}, \rho^H)$ for $k = 0, \ldots , 2^{q_p} -1$, where 
	\[
	\mu^\sigma_{S_{l}^i, N, k}(w) = \nu_N^\sigma (v \in V_N\colon v_{k + j} = w_j, \ j \in  S_{l}^i).
	\]
	{Thus, it only remains to show that
	\begin{equation}\label{lm_example_eq2768}
	    \mbbH_{2\eps} (A^{S_{l}^i}, \mu^\sigma_{S_{l}^i, N, k}, \rho^H) > \frac{\abs{S_{l}^i}}{\phi(l)},
	\end{equation}
	for sufficiently large $l$, for any $i$, and for any $k = 0, \ldots , 2^{q_p} -1$.}
	
	Note that all components of $w$ are divided into groups in such a way that all coordinates in one group are the same, and distinct groups are independent with respect to $\mu^\sigma_{S_{l}^i, N, k}$. Indeed, that is true for $\nu_N$, and measure $\mu^\sigma_{S_{l}^i, N, k}$ is a projection of $\nu_N$ onto some chosen coordinates.  
	
	Now we will use that all  $S_l^i$ are $\frac{1}{l}$--invariant sets. Each $S_l^i$ consists of some intervals of integer numbers. The $\frac{1}{l}$--invariance of $S_l^i$ means that the number of these intervals does not exceed  $\frac{1}{l}\abs{S_l^i}$. The number of different groups that have points both inside and outside of a given interval does not exceed $2^{q_p+1}$ because the length of each group is not greater than $2^{q_p}$. Then the total number of points in such groups does not exceed 
	\begin{equation}\label{bigl}
	\frac{2^{q_p+1}}{l}\abs{S_l^i} < l^{-\frac{1}{2}} \abs{S_l^i}.
	\end{equation}
	The inequality holds for $l > 2^{2q_p+2}$. We will call a component \emph{proper} if it does not lie in the union of such groups. Let $\tilde\rho^H$ be a Hamming metric on the proper coordinates. For $l > 4$, we have
	\[
	\tilde\rho^H(x,y) \le \frac{1}{1 -l^{{-}\frac{1}{2}}} \rho^H (x,y) \le 2 \rho^H (x,y) \quad x,y \in A^{S_l^i}. 
	\] 
	Then
	\begin{equation}\label{lm_example_eq_uniforminv}
	\mbbH_{2\eps}(A^{S_{l}^i}, \mu^\sigma_{S_{l}^i, N, k}, \rho^H) \ge 
	\mbbH_{4\eps}(A^{S_{l}^i}, \mu^\sigma_{S_{l}^i, N,k}, \tilde\rho^H).
	\end{equation}
	The right hand side of formula~\eqref{lm_example_eq_uniforminv} is exactly $4\eps$--entropy of a binary cube whose dimension is at least ${\abs{S_{l}^i}}{2^{-p-1}}$. This value is not less than 
	\begin{equation}\label{lm_example_cond1}
	c(\eps)\frac{\abs{S_{l}^i}}{2^{p+1}} > \frac{\abs{S_{l}^i}}{\phi(l)}
	\end{equation} 
	for sufficiently large $l$. It only remains to choose $n_{p+1} = l$ which satisfies condition~\eqref{lm_example_cond1} and the corresponding $N$. Then put $q_{p+1} = N+1$.

\subsection{Remarks on the sequential entropy} 
In this section, we discuss a supplemental result that follows from the arguments similar to those which we provide above. {It turns out that the estimates for the scaling entropy of the adic action also can be applied to the sequential entropy~\cite{Kush}.
\begin{defn}
Given a sequence $A = \{a_n\}_{n=1}^\infty$ of positive integers. Let $R$ be a measure--preserving transformation of a measure space $(Y, \nu)$ and $\xi$ be a finite partition. Set 
\[
h_A(R, \xi) = \limsup_{n\to \infty} \frac{1}{n} H(R^{-a_1}\xi \vee \ldots \vee R^{-a_n}\xi),
\]
\[ 
h_A(R) = \sup_{\xi} h_A(R, \xi),
\]
where the supremum is taken over all finite partitions of $(Y, \nu)$.
\end{defn}}
\begin{thm}\label{thm_kush}
Let $A = \{a_n\}_{n=1}^\infty$ be a sequence of positive integers such that $a_{n+1} - a_n \to \infty$. Then there exists $\sigma$ such that the adic action $(X, \mu^\sigma, T)$ satisfies 
\[
h_A(T) > 0,
\]
where $h_A(T)$ is the sequential entropy of $T$ corresponding to $A$. 
\end{thm}

{Theorem~\ref{thm_kush} answers the question about the existence of a zero entropy system which has positive sequential entropy with respect to $A = \{n^2\}$ (see~\cite{KKW}, Question~6.4.3).}
\begin{proof}
    Consider a sequence of finite sets $S_n = \{a_1, \ldots, a_n\}$. 
    The goal is to find a subsequence $n_j$ such that for any sufficiently small $\eps$
    \begin{equation}\label{thm_kush_eq1}
         \mbbH_\eps\lr{X, \mu, T_{av}^{S_{n_j}} \rho} \gtrsim n_j.   
    \end{equation}
    Let us repeat the proof of Lemma~\ref{lm_example} with $S_l$ instead of $S_l^i$ up to the sufficiency of the estimate {(i.\,e., formula \eqref{lm_example_eq2768})}
    \[
    \mbbH_{2\eps}(A^{S_{l}}, \mu^\sigma_{S_{l}, N, k}, \rho^H) \gtrsim l,
    \]
    since the $\frac{1}{l}$--invariance of $S_l^i$ appears only in the further arguments. Recall that with respect to $\mu^\sigma_{S_{l}, N, k}$ all the components are divided into groups of equal and distinct groups are independent. The length of each group does not exceed $2^{q_p}$, and since $a_{m+1} - a_m > 2^{q_p}$ starting with some $m_0$, we can find $l$ such that there are at least $\frac{1}{2}l$ groups which intersect $S_l$ by a unique component. Let~$\tilde\rho^H$ be a Hamming metric on these coordinates. Then we have $\tilde\rho^H \le  2 \rho^H$ as before. And finally, it suffices to estimate from below the $4\eps$--entropy of  $(A^{S_{l}}, \mu^\sigma_{S_{l}, N, k}, \tilde\rho^H)$ which is the $4\eps$--entropy of a binary cube of dimension at least $\frac{1}{2}l$ that is $c(\eps)l$. 
    
    Recall that $\rho$ is a cut semimetric corresponding to a finite partition $\xi$. Relation \eqref{thm_kush_eq1} combined with Lemma~\ref{lm_partitions} implies that $H(\bigvee_{i=0}^{n_j} T^{-a_i}\xi) \gtrsim n_j$ and thus $h_A(T) > 0$.
\end{proof}

\section{Proof of Lemma~\ref{lm_estimate}}
\label{sec_proof_est}
        
    Let us proceed to the last step, which is the proof of Lemma~\ref{lm_estimate}, that we need in order to complete the proof of the main theorem. First, we construct an appropriate partition of each measure space $(X_i, \mu_i)$ with semimetric $\rho_i$. These partitions provide a useful framework to deal with the epsilon--entropy of a product space. Second, we apply some probabilistic estimates that lead to the desired inequality.  
    
    For $i = 1,\ldots, k$, denote by $b_i$ the value of $2^{\mbbH_{4\eps}(X_i, \mu_i, \rho_i)}$. Since all the semimetrics $\rho_i$ are admissible, they have finite $\eps^2$--entropies. Consider the corresponding partition of $X_i$. Since $(X_i, \mu_i)$ is a continuous Lebesgue space, there exists a refinement $Y_0, \ldots, Y_r$ of this partition which satisfies the following: for~$j > 0$, we have  $\diam_\rho(Y_j) < \eps^2$, $\mu(Y_0) < 2\eps^2$, and $\mu(Y_{j_1}) = \mu(Y_{j_2})$ for all~$j_1, j_2 > 0$. 
		
	Consider the following procedure. Note that for any measurable $Z \subset X_i$ with $\mu(Z) < 4\eps$ there exists a $2\eps$--separated set of size $b_i$ in the difference  $X_i\setminus Z$. Put $Z_0 = Y_0$ and choose the corresponding  $2\eps$--separated set $\{p_1, \ldots, p_{b_i}\}$ in $X_i\setminus Z_0$. For each point $p_j$ find a cell $Y_j$ containing it and denote this sell by~$a^i_{1,j}$. Thus, we obtain a family $\{a^i_{1,j}\}_{j=1}^{b_i}$ of disjoint subsets. Let us denote the union of these subsets by $A^i_1$. Note that these sets satisfy the following property. For any  $x_i\in a^i_{1,j_1},\ y_i \in a^i_{1,j_2}$ with $ j_1 \not = j_2$ the distance between $x_i$ and $y_i$ is at least $2\eps - 2\eps^2 > \eps$ due to the triangle inequality. If $\mu(A^i_1) < \eps$, we can choose $Z_1 = Z_0 \cup A^i_1$ and similarly extract~$A^i_2$ from $X_i\setminus Z_1$ that is a union of subsets $a^i_{2,j}$, $j=1,\dots, b_i$, satisfying the same property. Thus, we can repeat this procedure until we obtain the following partition of $(X_i, \mu_i)$:
		\[
		X_i = \bigcup\limits_{l = 0}^{m_i} A_l^i, 
		\] 
		where $\mu_i(A_0^i) \le 1 - \eps$ and any $A_l^i$ with $l > 0$ admits a decomposition
		\[
		A_l^i = \bigcup\limits_{j=1}^{b_i} a_{l, j}^i
		\]
		such that for any $x \in a_{l, j_1}^i$ and $y \in a_{l, j_2}^i$ the $\rho_i$--distance between  $x$ and $y$ is not less than $\eps$, and all sets $a_{l, j}^i,\ l=1, \ldots, m_i,\ j = 1, \ldots, b_i$, have the same measure. 
	
		Now let us estimate the $\eps^4$--entropy of $(X,\mu,\rho)$ from below. Assume that a set $E \subset X$ with measure less than $\eps^4$ is given. We will look for a  $\eps^4$--separated set in its complement. For any point $x=(x_1,\dots,x_k) \in X$, we define a sequence $w = w(x) \in \prod_i \{1, \ldots, m_i\}$ of $k$ non--negative integers $\{w_r\}_{r = 1}^k$ in the following way:
		\[
		x_r \in A_{w_r}^r \text{ for } r = 1, \ldots, k.
		\] 
		
		Let us fix an arbitrary $w$ satisfying the inequality 
		\begin{equation}\label{lm_estimate_eq_condition}
			\suml_{w_r \not = 0 } s_r \ge \eps^2 \suml_{i=1}^k s_i.
		\end{equation}
		Consider the following set $S_w = \{x \in X\colon w(x) = w\}$. It is easy to see that $S_w$ can be represented as the following disjoint union:
		\begin{equation}\label{lm_estimate_eq_part}
			S_w = \bigcup\limits_{{i_r = 1, \ldots, b_r}} a^1_{w_1,i_1} \times a^2_{w_2, i_2} \times \dots \times a^k_{w_k,i_k},
		\end{equation}
		where we take the union over those indices  $i_r$ for which $w_r \not = 0$, and all the factors corresponding to $w_r = 0$ are equal to $A^r_0$. We will call the subsets on the right side of formula~\eqref{lm_estimate_eq_part} \emph{the cells} of~$S_w$. Note that all the cells have the same measure. For a point $x_i \in A_{l}^r$, we denote the set~$a^r_{l, j}$ containing~$x_i$ by~$a^r_{l}(x_i)$.
		Let $x, y \in S_w$, then
		\[
		\rho(x,y) \ge \frac{\eps}{\suml_{i=1}^k s_i} \suml_{w_r \not = 0} s_r\indicator\{{a^r_{w_r}(x_r) \not = a^r_{w_r}(y_r)}\}  \ge 
		\frac{\eps^3}{\suml_{w_r \not = 0} s_r} \suml_{w_r \not = 0} s_r\indicator\{{a^r_{w_r}(x_r) \not = a^r_{w_r}(y_r)}\}.
		\] 
		
		Assume that the subset $E$ contains less than a half of cells of~$S_w$ entirely. Let us estimate from below the cardinality of the maximal $\eps^4$--separated set in  $S_w \setminus E$. To do that, it suffices to establish an upper bound for the measure of an $\eps$--ball   on the space $\prod_{w_r \not = 0} \{1, \ldots, b_r\}$ with the uniform measure and the following semimetric~$\tilde\rho$:
		\[
		\tilde\rho (u,v) =  \frac{1}{\suml_{w_r \not = 0} s_r} \suml_{w_r \not = 0} s_r\indicator\{u_r \not = v_r\}.
		\]
		The random variables $u_r$ are mutually independent, and each $u_r$ is uniformly distributed on the set $\{1, \ldots, b_r\}$. Thus, it is enough to estimate the probability 
		
		
		\begin{multline}
			\mbbP\{ \suml_{w_r \not = 0} s_r \indicator\{ u_r \not = 1\} \le \eps \suml_{w_r \not = 0} s_r \} = 
			\mbbP\{ \suml_{w_r \not = 0} s_r \indicator\{ u_r = 1\} \ge (1 - \eps) \suml_{w_r \not = 0} s_r\} \le \\
			\mbbP\{ \suml_{w_r \not = 0} \log b_r \indicator\{ u_r = 1\} \ge \frac{1 - \eps}{\phi} \suml_{w_r \not = 0} s_r\} \le
			2^{- \frac{(1-\eps) \suml_{w_r \not = 0} s_r}{\phi}} \cdot \mbbE \lr{\prod\limits_{w_r \not = 0} 2^{\log b_r \indicator\{u_r =1\}}}.
		\end{multline} 
		The first inequality follows from the conditions for the weights. The second inequality holds due to the exponential Chebyshev's inequality. Let us estimate the first factor:   
		
		\begin{equation}
				2^{- \frac{(1-\eps) \suml_{w_r \not = 0} s_r}{\phi}} \le 	
				2^{- \frac{(1-\eps)\eps^2 \suml_{i = 1}^k s_i}{\phi}} \le \\ 
				2^{- \frac{(1-\eps)\eps^2 \suml_{i = 1}^k \log b_i}{\phi}} 
				= \lr{\prod_{i=1}^k b_i}^{-\frac{(1-\eps)\eps^2}{\phi}}
				{\le \lr{\prod_{i=1}^k b_i}^{-\frac{\eps^3}{\phi}}}.	
		\end{equation}
		To estimate the second factor we use the independence of $u_r$:
		\begin{equation}
			\prod\limits_{w_r \not = 0} \mbbE\lr{b_r^{\indicator\{u_r =1\}}} \le 
			\prod\limits_{i = 1}^k \lr{\frac{b_i}{b_i} + \frac{b_i -1}{b_i}} \le 
			 2^k.
		\end{equation}
		
		Thus, the desired probability does not exceed  $\lr{\prod_{i=1}^k b_i}^{-\frac{\eps^3}{\phi}}2^k$. Therefore, the size of the maximal $\eps^4$--separated set in $S_w \setminus E$ can be estimated from below by the value $\frac{1}{2}\lr{\prod_{i=1}^k b_i}^{\frac{\eps^3}{\phi}} 2^{-k}$. Hence,
		\[
		\mbbH_{\eps^4}(X,\mu, \rho) \ge
		\log \lr{\lr{\prod_{i=1}^k b_i}^{\frac{\eps^3}{\phi}} 2^{-k-1} }= 
		 \frac{1}{\phi}\eps^3  \suml_{i=1}^k \mbbH_{4\eps}(X_i, \mu_i, { \rho_{i}}) -k-1.
		\]
		If the desired inequality does not hold, then $E$ must entirely contain at least half of cells of~$S_w$ for~$w$ satisfying  condition~\eqref{lm_estimate_eq_condition}.
		Let us estimate the measure of those $x\in X$ that do not satisfy condition~\eqref{lm_estimate_eq_condition}:  
		{
		\begin{multline}
			\mu\Big\{x \in X \colon \suml_{i = 1}^{k} s_i \indicator\{x_i \in A_0^i\} \ge (1-\eps^2) \suml_{i = 1}^k s_i \Big\} \le\\ 
			\frac{\mbbE \suml_{i = 1}^{k} s_i \indicator\{x_i \in A_0^i\}}{ (1-\eps^2)\suml_{i = 1}^k s_i} = 
			\frac{\suml_{i = 1}^{k} s_i \mu_i(A_0^i)}{(1-\eps^2) \suml_{i = 1}^k s_i} \le 
			\frac{1-\eps}{1-\eps^2} \le 1 - \frac{\eps}{2}.			
		\end{multline}
		}
		Therefore, the measure of those $x \in X$ that satisfy condition~\eqref{lm_estimate_eq_condition} is at least~$\frac{\eps}{2}$. Hence, $\mu(E) \ge \frac{\eps}{4}$, and we obtain a contradiction to the choice of the exceptional set. The lemma is proved. 

\section*{Acknowledgements}
The author is sincerely grateful to his advisor Pavel~Zatitskiy for many helpful discussions. The author is also grateful to Valery~Ryzhikov for drawing the author's attention to this question. 

\bibliographystyle{amsplain}

\begin{thebibliography}{100}
	\bibitem{DS} T. Downarowicz, J. Serafin, \emph{Universal Systems for Entropy Intervals}, J. Dyn. Diff. Equat. 29, 1411--1422, 2017.
	
	\bibitem{F} S. Ferenczi, \emph{Measure-theoretic complexity of ergodic systems}, Israel Journal of Mathematics 100, 187--207, 1997.	
	
	\bibitem{KT} A. Katok, J.-P. Thouvenot, \emph{Slow entropy type invariants and smooth realization of commuting measure-preserving transformations}, Annales de Institut Henri Poincare 33, 323--338, 1997.
	
	\bibitem{KKW} A. Kanigowski, A. Katok, D. Wei. Survey on entropy-type invariants of sub-exponential growth in dynamical systems. \url{https://arxiv.org/abs/2004.04655v1}
	
	\bibitem{KL} D. Kerr, H. Li, \emph{Ergodic Theory: Independence and Dichotomies}, Springer, 2017.
	
	\bibitem{Kr}  W. Krieger, \emph{On entropy and generators of measure-preserving transformations}, Trans. Amer. Math. Soc. 149, 453--464, 1970.
	
	\bibitem{Kush} A. G. Kushnirenko,  \emph{On metric invariants of the type of entropy}. Uspekhi Matem. Nauk. 22(5):57-65, 1967.
	
	\bibitem{PZ} F. V. Petrov, P. B. Zatitskiy, \emph{On the subadditivity of a scaling entropy sequence}, Journal of Mathematical Sciences, 215:6, 734--737, 2016.
    
    \bibitem{S} J. Serafin, \emph{Non-existence of a universal zero-entropy system}, Israel Journal of Mathematics 194, no. 1, 349--358, 2013.
	
	\bibitem{SW} O. Shilon, B. Weiss, \emph{Universal minimal topological dynamical systems}, Israel Journal of Mathematics 160, 119--141, 2007.
    
    \bibitem{Vep} G.~Veprev, \emph{Scaling Entropy of Unstable Systems}, \url{https://arxiv.org/abs/2010.05742}.
    
    {\bibitem{V2010} A.~M.~Vershik, \emph{Information, entropy, dynamics, in: Mathematics of the 20th Century: A View from Petersburg} [in Russian], MCCME, pp. 47– 76,  2010.}
    \bibitem{V1} A. M. Vershik, \emph{Dynamics of metrics in measure spaces and their asymptotic invariants}, Markov Processes and Related Fields, 16:1, 169--185, 2010.
    
    \bibitem{V3} A. M. Vershik, \emph{Scaling entropy and automorphisms with pure point spectrum}, St. Petersburg Math. J., 23:1, 75--91, 2012.
			
    \bibitem{VPZ} A. M. Vershik, P. B. Zatitskiy, F. V. Petrov, \emph{Geometry and dynamics of admissible metrics in measure spaces}, Central European Journal of Mathematics, 11 (3), 379--400, 2013.
    
    \bibitem{VZ} {A. M. Vershik, P. B. Zatitskii, \emph{Universal adic approximation, invariant measures and scaled entropy}, Izv. Math., 81:4 , 734--770, 2017.}
    
    \bibitem{VZ2} A.~M.~Vershik, P.~B~Zatitskiy, \emph{Combinatorial Invariants of Metric Filtrations and Automorphisms; the Universal Adic Graph}, Funct. Anal. Its Appl., 52, 258–269, 2018.
    
	
    \bibitem{W} B.~Weiss, \emph{Countable generators in dynamics-universal minimal models}, Contemp. Math. 94, 321–326, 1989.
    
    \bibitem{Z1} P. B. Zatitskiy, \emph{Scaling entropy sequence: invariance and examples}, Journal of Mathematical Sciences, 209:6, 890--909, 2015.
	
	\bibitem{Z2} P. B. Zatitskiy, \emph{On the possible growth rate of a scaling entropy sequence}, Journal of Mathematical Sciences, 215:6, 715--733, 2016.
\end{thebibliography}

\end{document}